\newtheorem{remark}{Remark}[section]
\theoremstyle{plain}
\newtheorem{lemma}{Lemma}[section]
\newtheorem{proposition}{Proposition}[section]
\newtheorem{theorem}{Theorem}[section]
\newtheorem{definition}{Definition}[section]
\newtheorem{corollary}{Corollary}[section]
\newtheorem{example}{Example}[section]
\title{\bfseries\scshape{Hom-alternative  modules and Hom-Poisson  comodules}}
\author{\bfseries\scshape  Ibrahima BAKAYOKO \thanks{E-mail address: \tt{ibrahimabakayoko@yahoo.fr}}\\
% University of Liverpool. Department of Mathematical Sciences. \\ 
%M$\&$O Building, 
%Peach Street, Liverpool, L69 7ZL, UK,  and,\\
 D\'epartement de Math\'ematiques,\\ 
UJNK/Centre Universitaire de N'Z\'er\'ekor\'e,\\
BP : 50, N'Z\'er\'ekor\'e, Guin\'ee.\\
\\\bfseries\scshape Bakary MANGA\thanks{E-mail address: \tt{bakary.manga@ucad.edu.sn, bakary.manga@imsp-uac.org}}\\
D\'epartement de Math\'ematiques et Informatique, \\ 
 Universit\'e Cheikh Anta Diop de Dakar, S\'en\'egal
\\ and\\
Institut de Math\'ematiques et de Sciences Physiques, Porto-Novo, B\'enin.}
\date{} 
\begin{document} 
\maketitle

\begin{abstract} 
In this paper we introduce modules over both left and right Hom-alternative algebras. 
We give some constructions of left and right Hom-alternative modules and give various 
properties of both, as well as examples. Then, we prove that morphisms of left alternative 
algebras extend to morphisms of left Hom-alternative algebras. 
Next, we introduce comodules over Hom-Poisson coalgebras and show that we may obtain a structure map 
of a comodule over a Hom-Poisson coalgebra from a given one.
\end{abstract} 

\noindent {\bf AMS Subject Classification:} $17A30$, $16S80$, $16D10$, $17B63$.

\noindent
{\bf Keywords:} alternative algebras, left Hom-alternative algebras, right Hom-alternative algebras, 
Hom-Poisson coalgebras, Hom-alternative modules, Hom-Poisson comodules.

\section{Introduction}

The study of Hom-algebraic structures, that is algebras where the identities defining the structures are twisted by a homomorphism, 
is an active and prolific field of research. The main motivations of studying such structures come from deformations of Witt and Virasoro 
algebras (\cite{hartwig-larson-silvestrov:deformations}, \cite{larson-silvestrov:quasi-hom-lie-central-extensions}), 
number theory, Yang-Baxter Equations, quantum groups, etc (see \cite{yau-hom-bialgebras-comodule-hom-bialgebras}
and references therein). There are various settings of Hom-structures such as algebras, coalgebras, Hopf algebras, Leibniz algebras, 
$n$-ary algebras, Poisson algebras, alternative algebras, etc (see \cite{elhamdadi-makhlouf:deformations-hom-alternative-hom-malcev}
and references therein). 

Left alternative algebras, right alternative algebras and Hom-alternative algebras were introduced by Makhlouf 
in  \cite{makhlouf:hom-alternative-hom-jordan}. Since then, twisted versions of alternative algebras arise: the so-called Hom-alternative
algebras. 
% 
% A construction of Hom-alternative
% algebras are given ; ordinary alternative algebras lead to Hom-alternative algebras via an algebra endomorphism. Examples of non-associative
% Hom-alternative algebras are derived from $4$-dimensional alternative algebras and from octinions algebra.
Hom-Poisson algebras, when to them, were introduced in \cite{AS2} where they appear naturally in the study 
of $1$-parameter formal deformations of commutative Hom-associative algebras.
Noncommutative Hom-Poisson algebras are a twisted generalization of noncommutative Poisson algebras. 
In mathematics, Poisson algebras play a fundamental role in Poisson geometry
 (\cite{vaisman-lecture-geometry-poisson}), quantum groups (\cite{drinfeld:quantum-group}), 
 and deformation of  commutative associative algebras (\cite{gerstenhaber-deformation-ring-algebra}). 
 In physics, Poisson algebras are a  major part of deformation quantization 
 (\cite{kontsevich:deformation-quatization-poisson}), 
 Hamiltonian mechanics (\cite{arnold-math-method-classical-mech}), and topological field theories (\cite{shaller-strobl:poisson-structure}). 
Poisson-like structures are also used in the study of vertex operator algebras (\cite{frenkel-ben_zvi:vertex}).
 Many examples of noncommutative  Hom-Poisson algebras are exposed in \cite{yau:noncom-hom-poisson-algebras}.
Modules over color Hom-Poisson algebras are studied in \cite{bakayoko:module-color-hom-poisson}.

As in the case of Hom-associative algebras, Hom-coassociative coalgebras, Hom-Lie algebras, Hom-bialgebras, and  
Hom-Lie bialgebras, Hom-Poisson coalgebras have a dual version called Hom-coPoisson algebras 
in \cite{bordeman-elchinger-makhlouf:twisting-poisson-copoisson-quantization}, where they were
 introduced and studied. It is shown that starting from classical Poisson coalgebras, 
 we may construct Hom-Poisson coalgebras using Hom-Poisson morphisms. Otherwise, a Hom-Poisson
coalgebra give rise to infinitely many Hom-Poisson coalgebras.

The purpose of this paper is to study and present various twisting
% {\color{red} Yau's twisting}
 of modules over left Hom-alternative algebras and comodules 
over Hom-Poisson coalgebras. In Section \ref{sec:preliminaries}, we recall basic definitions and properties on left and 
right Hom-alternative algebras. 
We give a connection between left alternative algebras and right alternative algebras.
Modules over left and right Hom-alternative algebras are introduced in Section \ref{sec:modules-alternative-algebras} where we 
also prove that twisting a left Hom-alternative module structure by an algebra endomorphism we get another one. 
We also show that morphisms of left alternative modules extend to morphisms of left
Hom-alternative modules. We introduce a version of Hom-associator for module and show that the Hom-alternative 
module is  alternating with right to its 
first two arguments. 
%Other properties of left and right Hom-alternative modules are also estblished.
Section \ref{sec:hom-poisson-comodules} deals with Hom-Poisson comodules. Some constructions of 
Hom-Poisson comodules are given and we carry out that starting 
from a Hom-Poisson comodule we get another one by twisting the module structure map by an algebra endomorphism.

Throughout this paper, $\bf K$ denotes a field of characteristic $0$ so that unless otherwise specified linearity and $\otimes$
are all meant over $\bf K$.

\section{Preliminaries \label{sec:preliminaries}}

\subsection{Hom-alternative algebras}
\begin{definition}[\cite{makhlouf:hom-alternative-hom-jordan}]
 A left (resp. right) Hom-alternative algebra  is a triple $(A, \mu, \alpha)$ consisting of a 
 $\bf K$-linear space $A$, a multiplication $\mu :A\otimes A\to A$ and a linear map 
 $\alpha : A\to A$ satisfying, for any $x, y\in A$, 
 the left (resp. right) Hom-alternative identity, that is 
 \begin{equation}
\mu\Big(\alpha(x), \mu(x, y)\Big)=\mu\Big(\mu(x, x), \alpha(y)\Big)  \quad \Big[\text{resp. } 
\mu\Big(\alpha(x), \mu(y, y)\Big)=\mu\Big(\mu(x, y),\alpha(y)\Big) \Big].
 \end{equation}
A Hom-alternative algebra is one which is both a left and a right Hom-alternative algebra.
\end{definition}
\begin{example}{\normalfont
A Hom-associative algebra is a triple  $(A,\mu,\alpha)$, where $\mu:A\times A\to A$ is
a bilinear map and $\alpha:A\to A$ is a linear map satisfying %: for any $x$, $y$ and $z$ in $A$,
$%\begin{equation}
 \mu\Big(\alpha(x),\mu(y,z)\Big)=\mu\Big(\mu(x,y),\alpha(z)\Big)
$, for any $x$, $y$ and $z$ in $A$. %\end{equation}
It is a little matter to verify that a Hom-associative algebra is a Hom-alternative algebra. }
\end{example}
Let $(A, \mu, \alpha)$ be a left Hom-alternative algebra. We consider the  maps 
$\tau:A\otimes A \to A\otimes A$ and $-\mu:A\otimes A\to A$ given respectively by $\tau(x,y)=(y,x)$ and $(-\mu)(x,y)=-(\mu(x,y))$, 
for all $x$ and $y$ in $A$. Now we define, the multiplication $\mu^{op}:A\otimes A\to A$ by $\mu^{op}=\mu\circ\tau$; that is 
$\mu^{op}(x, y)=\mu(y, x)$, for any $x,y \in A$. We also set $\mu_k=k\mu$ and $\alpha_k=k\alpha$, for all $k\in \bf K$.
We have the following straightforward result.
\begin{lemma}\label{-t}
 Let $(A,\mu,\alpha)$ be a left Hom-alternative algebra. Then, \\
%\begin{enumerate}
a)  $(A, \mu_k, \alpha_{k'})$ is a left Hom-alternative algebra, for all $k, k'\in \bf K$;\\
b)  $(A, \mu^{op}, \alpha)$ is a right Hom-alternative algebra.
%\end{enumerate}
% $(A, -\mu, \alpha)$ is also a left Hom-alternative algebra while 
\end{lemma}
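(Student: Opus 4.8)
The plan is to verify each of the two claimed identities directly from the definition, since Lemma \ref{-t} asserts two separate facts: that $(A,-\mu,\alpha)$ is left Hom-alternative and that $(A,\mu^{op},\alpha)$ is right Hom-alternative. Both are ``straightforward'' because they reduce to substituting the new multiplications into the defining identities and matching against the hypothesis that $(A,\mu,\alpha)$ is left Hom-alternative.

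First I would treat $(A,-\mu,\alpha)$. Writing $\nu=-\mu$, I must check the left Hom-alternative identity $\nu(\alpha(x),\nu(x,y))=\nu(\nu(x,x),\alpha(y))$ for all $x,y\in A$. Expanding, the left-hand side is $-\mu(\alpha(x),-\mu(x,y))=\mu(\alpha(x),\mu(x,y))$, since the two sign factors cancel; similarly the right-hand side is $-\mu(-\mu(x,x),\alpha(y))=\mu(\mu(x,x),\alpha(y))$. These two expressions are equal precisely by the left Hom-alternative identity for $(A,\mu,\alpha)$. So the sign twist passes through harmlessly because the identity is quadratic in the multiplication on each side.

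Next I would treat $(A,\mu^{op},\alpha)$ and check the \emph{right} Hom-alternative identity, namely $\mu^{op}(\alpha(x),\mu^{op}(y,y))=\mu^{op}(\mu^{op}(x,y),\alpha(y))$. Using $\mu^{op}(a,b)=\mu(b,a)$, the left-hand side becomes $\mu\big(\mu^{op}(y,y),\alpha(x)\big)=\mu\big(\mu(y,y),\alpha(x)\big)$ and the right-hand side becomes $\mu\big(\alpha(y),\mu^{op}(x,y)\big)=\mu\big(\alpha(y),\mu(y,x)\big)$. After renaming $y\mapsto x$, $x\mapsto y$, the required equality $\mu(\alpha(x),\mu(x,y))=\mu(\mu(x,x),\alpha(y))$ is again exactly the left Hom-alternative identity for $(A,\mu,\alpha)$. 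Thus taking the opposite multiplication interchanges the left and right versions of the identity, which is the expected duality.

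I do not anticipate a genuine obstacle here; the only point requiring a little care is bookkeeping of the argument order under $\tau$ and the sign cancellation under $-\mu$, so that each transformed identity is matched with the correct instance of the original hypothesis (with variables possibly relabeled). Since both checks close by a single direct substitution, the result is immediate.
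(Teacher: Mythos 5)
Your verification is correct: both sign cancellation for $-\mu$ and the order reversal under $\mu^{op}$ (which converts the right Hom-alternative identity into the left one after relabeling $x\leftrightarrow y$) check out. The paper gives no proof, stating the result is straightforward, and your direct substitution argument is exactly the intended routine verification.
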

\begin{definition}[\cite{makhlouf:hom-alternative-hom-jordan}]
 Let $(A, \mu, \alpha)$ and $(A', \mu', \alpha')$ be two Hom-alternative algebras. 
 A linear map $f : A\to A'$ is said to be a morphism of Hom-alternative algebras if 
 \begin{equation}
\mu'\circ(f\otimes f)=f\circ\mu \text{ and } f\circ\alpha=\alpha'\circ f.
 \end{equation}
\end{definition}
A usefull characterization of left (resp. right) Hom-alternative algebras is given by the
\begin{proposition}[\cite{makhlouf:hom-alternative-hom-jordan}] \label{pro:cns-hom-alternative-algebra}
 A triple $(A, \mu, \alpha)$ is a left (resp. right) Hom-alternative algebra if and only if the following identity holds:
 for any $x$, $y$ and $z$ in $A$, 
\begin{eqnarray}
 \mu(\alpha(x), \mu(y, z))-\mu(\mu(x, y), \alpha(z))+\mu(\alpha(y), \mu(x, z))-\mu(\mu(y, x), \alpha(z))=0,
\end{eqnarray}
respectively,
\begin{eqnarray}
 \mu(\alpha(x), \mu(y, z))-\mu(\mu(x, y),\alpha(z))+\mu(\alpha(x), \mu(z, y))-\mu(\mu(x, z), \alpha(y))=0.
\end{eqnarray}
\end{proposition}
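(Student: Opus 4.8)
The plan is to recast the statement entirely in terms of the \emph{Hom-associator}
\[
as_\alpha(x,y,z):=\mu\big(\alpha(x),\mu(y,z)\big)-\mu\big(\mu(x,y),\alpha(z)\big),
\]
which is trilinear since $\mu$ is bilinear and $\alpha$ is linear. In this language the left Hom-alternative identity of the definition reads $as_\alpha(x,x,y)=0$ for all $x,y\in A$, whereas the identity to be proved is exactly $as_\alpha(x,y,z)+as_\alpha(y,x,z)=0$, i.e. the antisymmetry of $as_\alpha$ in its first two slots. Thus the proposition reduces to the classical equivalence ``vanishing on the diagonal $\Longleftrightarrow$ antisymmetry'' for a map bilinear in its first two variables, valid whenever $2$ is invertible.

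For the forward implication I would polarize: the left Hom-alternative axiom applied with $x+y$ in the repeated argument yields $as_\alpha(x+y,x+y,z)=0$, and expanding by trilinearity gives
\[
as_\alpha(x,x,z)+as_\alpha(x,y,z)+as_\alpha(y,x,z)+as_\alpha(y,y,z)=0.
\]
The first and last terms vanish by the axiom applied to $x$ and to $y$ separately, leaving $as_\alpha(x,y,z)+as_\alpha(y,x,z)=0$, which is the asserted relation once $as_\alpha$ is written out. For the converse I would set $y=x$ in the antisymmetry identity to obtain $2\,as_\alpha(x,x,z)=0$; since $\mathbf{K}$ has characteristic $0$, this forces $as_\alpha(x,x,z)=0$, recovering the left Hom-alternative identity.

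The right Hom-alternative case is entirely parallel, using the same Hom-associator but with the diagonal on the \emph{last} two arguments: the right axiom reads $as_\alpha(x,y,y)=0$, and polarizing via $as_\alpha(x,y+z,y+z)=0$ reduces to the antisymmetry $as_\alpha(x,y,z)+as_\alpha(x,z,y)=0$, with the converse again obtained by setting $z=y$ and dividing by $2$. Alternatively, I would deduce the right case from the left one by an argument dual to Lemma \ref{-t}: since the opposite algebra $(A,\mu^{op},\alpha)$ of a right Hom-alternative algebra is left Hom-alternative, the right identity follows by applying the already-proved left equivalence to $\mu^{op}$ and translating back through $\mu^{op}(x,y)=\mu(y,x)$. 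I anticipate no genuine obstacle here; the only point demanding attention is the invertibility of $2$ in the converse direction, which is guaranteed by the characteristic $0$ hypothesis, whereas the forward direction is a pure polarization requiring no restriction on the characteristic.
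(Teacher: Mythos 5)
Your proof is correct; the paper itself does not prove this proposition (it is quoted from Makhlouf), but the polarization argument $as_\alpha(x+y,x+y,z)=0$ you use is exactly the one the paper employs for its module analogue (the proof of identity (\ref{had})), with the converse handled in the standard way via characteristic $0$. So this is essentially the same approach, and your remarks on where invertibility of $2$ is actually needed are accurate.
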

\begin{proposition}[\cite{makhlouf:hom-alternative-hom-jordan}]
 Let $(A, \mu, \alpha)$ be a Hom-alternative algebra and $x, y, z\in A$. If $x$ and $y$ anticommute, 
 that is $\mu(x, y)=-\mu(y, x)$, then we have
\begin{eqnarray}
 \mu\Big(\alpha(x), \mu(y, z)\Big)=-\mu\Big(\alpha(y), \mu(x, z)\Big)\; \text{ and } \;  
 \mu\Big(\mu(z, x), \alpha(y)\Big)=-\mu\Big(\mu(z, y), \alpha(x)\Big).
\end{eqnarray}
\end{proposition}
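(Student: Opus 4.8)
The plan is to derive both identities directly from the two linearized characterizations of left and right Hom-alternativity furnished by the preceding Proposition, exploiting the hypothesis $\mu(x,y)=-\mu(y,x)$ to collapse one side of each identity to zero. No quadratic manipulation is needed: the polarized forms already contain precisely the terms appearing in the claim, together with a pair of companion terms that will cancel against each other by bilinearity once anticommutativity is invoked.

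First I would establish the left-hand identity $\mu(\alpha(x),\mu(y,z))=-\mu(\alpha(y),\mu(x,z))$. The left linearized identity reads
\[
\mu(\alpha(x),\mu(y,z))-\mu(\mu(x,y),\alpha(z))+\mu(\alpha(y),\mu(x,z))-\mu(\mu(y,x),\alpha(z))=0.
\]
By bilinearity of $\mu$, the second and fourth terms combine into $-\mu\big(\mu(x,y)+\mu(y,x),\alpha(z)\big)$. Since $x$ and $y$ anticommute, $\mu(x,y)+\mu(y,x)=0$, so this contribution vanishes and we are left with $\mu(\alpha(x),\mu(y,z))+\mu(\alpha(y),\mu(x,z))=0$, which is exactly the first assertion.

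Next I would treat the right-hand identity $\mu(\mu(z,x),\alpha(y))=-\mu(\mu(z,y),\alpha(x))$. The key step is to apply the right linearized identity after the substitution $(x,y,z)\mapsto(z,x,y)$, which yields
\[
\mu(\alpha(z),\mu(x,y))-\mu(\mu(z,x),\alpha(y))+\mu(\alpha(z),\mu(y,x))-\mu(\mu(z,y),\alpha(x))=0.
\]
Here the first and third terms combine into $\mu\big(\alpha(z),\mu(x,y)+\mu(y,x)\big)=\mu(\alpha(z),0)=0$, again by anticommutativity of $x$ and $y$. What remains is $\mu(\mu(z,x),\alpha(y))+\mu(\mu(z,y),\alpha(x))=0$, the second assertion.

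The computation itself is routine once the correct tools are in place; the only point demanding care, and hence the main (modest) obstacle, is selecting the right variable alignment in each linearized identity so that exactly the anticommuting pair $\{x,y\}$ sits inside a single application of $\mu$ and cancels. For the first identity the left version is used in its stated form, whereas for the second one must first permute the arguments to $(z,x,y)$ in the right version; matching these alignments is precisely what allows the anticommutativity hypothesis to take effect.
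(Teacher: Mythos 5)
Your proof is correct: both identities follow exactly as you describe from the two linearized (polarized) forms of the left and right Hom-alternative identities, with the anticommutativity hypothesis killing the companion pair of terms in each case; the substitution $(x,y,z)\mapsto(z,x,y)$ in the right identity is the right move. The paper itself does not supply a proof (the proposition is quoted from Makhlouf's paper on Hom-alternative and Hom-Jordan algebras), but your argument is the standard one and is essentially the proof given in that source.
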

Now given a left (resp. right) alternative algebra, one can construct a left (resp. right) Hom-alternative
algebra (see \cite{makhlouf:hom-alternative-hom-jordan}). Recall that an alternative algebra is a pair 
$(A,\mu)$, where $A$ is an algebra and  $\mu:A\times A\to A$ is an alternative 
(not necessarily associative) multiplication; {\it i. e.}  $\mu(x,\mu(x,y))=\mu(\mu(x,x),y)$, for any $x$ 
and $y$ in $A$.
\begin{lemma}[\cite{makhlouf:hom-alternative-hom-jordan}]\label{lhatl}
 Let $(A, \mu)$ be a left (resp. right) alternative algebra  and $\alpha : A\to A$ be an algebra endomorphism.
 Then $(A, \mu_\alpha, \alpha)$, where $\mu_\alpha=\alpha\circ\mu$, is a left (resp. right) Hom-alternative algebra.

 Moreover, suppose that $(A', \mu')$ is another left  (resp. right) alternative algebra and $\alpha' : A'\to A'$
is an algebra endomorphism. If $f : A\to A'$ is an algebras morphism that satisfies $\alpha'\circ f=f\circ\alpha$ then, 
$f : (A, \mu_\alpha, \alpha)\to (A', \mu'_{\alpha'}, \alpha')$
is a morphism of left (resp. right) Hom-alternative algebras.
\end{lemma}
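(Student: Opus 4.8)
The plan is to verify the defining identities directly, exploiting the fact that $\alpha$ intertwines with the multiplication because it is an algebra endomorphism, i.e. $\alpha(\mu(u,v))=\mu(\alpha(u),\alpha(v))$. I would treat the left-alternative case in full, the right case being entirely symmetric. For the first assertion the goal is the left Hom-alternative identity for $(A,\mu_\alpha,\alpha)$, namely $\mu_\alpha\big(\alpha(x),\mu_\alpha(x,y)\big)=\mu_\alpha\big(\mu_\alpha(x,x),\alpha(y)\big)$. The first step is to expand the left-hand side using $\mu_\alpha=\alpha\circ\mu$, and then to push the surviving outer copy of $\alpha$ inside the product via the endomorphism property; the inner factor $\mu_\alpha(x,y)=\alpha(\mu(x,y))$ is handled the same way.

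After these substitutions the left-hand side collapses to $\alpha^2\big(\mu(x,\mu(x,y))\big)$, and the identical manipulation applied to the right-hand side yields $\alpha^2\big(\mu(\mu(x,x),y)\big)$. The two agree because the untwisted left-alternative identity $\mu(x,\mu(x,y))=\mu(\mu(x,x),y)$ holds in $(A,\mu)$, and applying $\alpha^2$ to both sides preserves equality. In other words, the twisted identity is nothing but $\alpha^2$ applied to the original one. For the right-alternative statement the same computation goes through after adjusting which variable is repeated, using $\mu(x,\mu(y,y))=\mu(\mu(x,y),y)$.

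For the second assertion I would check the two conditions in the definition of a morphism of Hom-alternative algebras. The commutation $f\circ\alpha=\alpha'\circ f$ is precisely the hypothesis, so nothing is needed there. For multiplicativity $\mu'_{\alpha'}\circ(f\otimes f)=f\circ\mu_\alpha$, I would evaluate on a pair $(x,y)$: since $f$ is an algebra morphism, $\mu'(f(x),f(y))=f(\mu(x,y))$; applying $\alpha'$ gives $\alpha'(f(\mu(x,y)))$, which the hypothesis $\alpha'\circ f=f\circ\alpha$ rewrites as $f(\alpha(\mu(x,y)))=f(\mu_\alpha(x,y))$, as required. I do not anticipate any genuine obstacle, as every step is a routine substitution; the only point needing mild care is the bookkeeping of where the endomorphism property is invoked versus where the alternative identity is used, since conflating the two would be the natural slip.
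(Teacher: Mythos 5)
Your proof is correct. The paper itself gives no proof of this lemma --- it is quoted from Makhlouf's paper on Hom-alternative and Hom-Jordan algebras --- but your argument is exactly the standard ``composition twisting'' one: both sides of the twisted identity reduce to $\alpha^{2}$ applied to the corresponding side of the untwisted alternative identity, and the morphism claim follows by chaining multiplicativity of $f$ with the intertwining relation $\alpha'\circ f=f\circ\alpha$.
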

\subsection{Hom-Poisson coalgebras}
In the sequel, $A$ is a $\bf K$-linear space while $\sigma$ denotes 
the linear map $\sigma: A\otimes A\otimes A \to A\otimes A\otimes A$
defined by $\sigma(x_1\otimes x_2\otimes x_3) = x_3\otimes x_1\otimes x_2$. 
\begin{definition}[\cite{bordeman-elchinger-makhlouf:twisting-poisson-copoisson-quantization}]\label{hpdd}
A cocommutative Hom-Poisson coalgebra consists of a quadruple $(A, \Delta, \gamma, \alpha)$ in which
\begin{enumerate}
 \item[i)] $(A, \Delta, \alpha)$ is a cocommutative Hom-coassociative coalgebra, {\it i.e.} 
 $\Delta:A\to A\otimes A$ is a linear map and $\alpha: A \to A$ is a linear self map such that
 \begin{eqnarray}
\Delta&=&\tau\circ\Delta  \quad \mbox{ \rm (cocommutativity), } \label{eq:cocommutativity-Delta}\\
\Delta\circ\alpha&=&\alpha^{\otimes2}\circ\Delta  \quad \mbox{ \rm (comultiplicativity), } \label{hp1p}\\
 (\alpha\otimes\Delta)\circ\Delta&=&(\Delta\otimes\alpha)\circ\Delta \quad  \mbox{ \rm (Hom-coassociativity); }\label{hp2p}
 \end{eqnarray}
\item[ii)] $(A, \gamma, \alpha)$ is  a Hom-Lie coalgebra, {\it i.e.} $\gamma:A\to A\otimes A$ 
and $\alpha:A\to A$ are linear maps such that
\begin{eqnarray}
  \gamma&=&-\tau\circ\gamma \quad \mbox{\rm (skew-cosymmetry)},\label{p31}\\
\gamma\circ\alpha&=&\alpha^{\otimes2}\circ\gamma \quad\mbox{\rm (comultiplicativity)},\label{p41}\\
(Id_A+\sigma+\sigma^2)\circ(\alpha\otimes\gamma)\circ\gamma&=&0 \quad \mbox{\rm (Hom-coJacobi identity)};\label{p51}
 \end{eqnarray}
\item [iii)] the Hom-coLeibniz identity holds, {i.e.}
\begin{eqnarray}
 (\alpha\otimes\Delta)\circ\gamma = (\gamma\otimes\alpha)\circ\Delta 
 + (\tau\otimes Id_A)\circ(\alpha\otimes\gamma)\circ\Delta.\label{p61}
\end{eqnarray}
\end{enumerate}
\end{definition}
\begin{remark}
With the notations of Definition \ref{hpdd}, we have the following.
\begin{enumerate}
\item  If $\Delta$ is non-cocommutative, that is Equation (\ref{eq:cocommutativity-Delta})  is not satisfied, 
then $(A, \Delta, \gamma, \alpha)$ is said to be a non-cocommutative Hom-Poisson coalgebra.
\item When $\alpha=Id_A$, $(A, \Delta, \gamma,  Id_A)$, simply denoted $(A, \Delta, \gamma)$, 
is a Poisson coalgebra.
\item In the Sweedler's notation, the conditions (\ref{hp1p}),  (\ref{hp2p}) and (\ref{p61}) 
mean respectively that 
\end{enumerate}
\begin{eqnarray}
 \sum\alpha(x)_1\otimes\alpha(x)_2&=&\sum\alpha(x_1)\otimes\alpha(x_2), \label{hp1}\\
\sum \alpha(x_1)\otimes x_{21}\otimes x_{22}&=&\sum x_{11}\otimes x_{12}\otimes\alpha(x_2), \label{hp2}\\
\sum\alpha(x_{(1)})\otimes x_{(2)1}\otimes x_{(2)2}&=&
(x_{1})_{(1)}\otimes (x_1)_{(2)}\otimes\alpha(x_2)  %\nonumber\\
+(x_2)_{(1)}\otimes\alpha(x_1)\otimes(x_2)_{(2)}, \label{hp2a}
\end{eqnarray}
where $\Delta(x)=\sum x_1\otimes x_2$ and $\gamma(x)=\sum x_{(1)}\otimes x_{(2)}$.
%\end{enumerate}
\end{remark}
% {\color{red} D\'efinir {\bf morphism of coassociative coalgebras } et {\bf morphism of Lie coalgebras} }
\begin{definition}
 Let $(A,\Delta, \alpha)$ and $(A',\Delta', \alpha')$ be two Hom-coassociative algebras. 
A linear map $f : A\rightarrow A$ is said to be a morphism of Hom-coassociative coalgebras if 
$$f\circ\alpha=\alpha'\circ f\quad \mbox{and}\quad \Delta\circ f=f^{\otimes2}\circ\Delta.$$
\end{definition}
We have a similar definition for morphisms of Hom-Lie coalgebras.
% Observe that any twisting map of Hom-poisson coalgebras are both morphism of Hom-coassociative coalgebras and morphism
% of Hom-Lie coalgebras.
\begin{definition}
If  $(A,\Delta,\gamma,\alpha)$ and $(A',\Delta',\gamma',\alpha')$ are Hom-Poisson coalgebras 
(not necessarily cocommutative), a linear map $f:A\to  A'$ is a morphism of Hom-Poisson coalgebras 
if it is a morphism of the underlying coassociative coalgebras and Lie coalgebras 
such that $f\circ\alpha=\alpha'\circ f$.
\end{definition}
% {\color{red} Le lemme suivant est-il original, i.e. est-il un r\'esultat nouveau ?} oui
\begin{lemma}\label{+}
Suppose $(A, \Delta, \gamma, \alpha)$ is a non-cocommutative Hom-Poisson coalgebra, then 
$(A, \Delta^{op}, \gamma, \alpha)$ and $(A, -\Delta, -\gamma, \alpha)$
 are also non-cocommutative Hom-Poisson coalgebras.
\end{lemma}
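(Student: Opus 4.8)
The plan is to verify, for each of the two candidate quadruples, the three groups of axioms of Definition \ref{hpdd} with cocommutativity (\ref{eq:cocommutativity-Delta}) omitted, since we work in the non-cocommutative setting: the Hom-coassociative coalgebra relations (\ref{hp1p})--(\ref{hp2p}) for the new comultiplication, the Hom-Lie coalgebra relations (\ref{p31})--(\ref{p51}) for the new cobracket, and the Hom-coLeibniz compatibility (\ref{p61}). The two assertions are independent, and the one about $(A,-\Delta,-\gamma,\alpha)$ is pure sign bookkeeping, so I would dispose of it first.

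For $(A,-\Delta,-\gamma,\alpha)$, the point is that in each defining relation both sides scale by the same power of $-1$ under the substitution $\Delta\mapsto-\Delta$, $\gamma\mapsto-\gamma$. Indeed, writing a term as a composite containing $p$ copies of $\Delta$ and $q$ copies of $\gamma$, the substitution multiplies it by $(-1)^{p+q}$; and within each relation this exponent has constant parity: the comultiplicativity relations (\ref{hp1p}), (\ref{p41}) and skew-cosymmetry (\ref{p31}) are of degree one, whereas Hom-coassociativity (\ref{hp2p}), Hom-coJacobi (\ref{p51}) and every summand of (\ref{p61}) are of degree two. Hence each relation, and the vanishing in (\ref{p51}), is preserved, so $(A,-\Delta,-\gamma,\alpha)$ is again a non-cocommutative Hom-Poisson coalgebra.

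For $(A,\Delta^{op},\gamma,\alpha)$ the cobracket is untouched, so (\ref{p31})--(\ref{p51}) hold by hypothesis and only the part involving $\Delta^{op}=\tau\circ\Delta$ needs attention. Comultiplicativity follows from $\Delta^{op}\circ\alpha=\tau\circ\Delta\circ\alpha=\tau\circ\alpha^{\otimes2}\circ\Delta=\alpha^{\otimes2}\circ\tau\circ\Delta=\alpha^{\otimes2}\circ\Delta^{op}$, using that $\tau$ commutes with $\alpha^{\otimes2}$. For Hom-coassociativity I would apply the order-reversing permutation $\rho=(\tau\otimes Id_A)\circ(Id_A\otimes\tau)\circ(\tau\otimes Id_A)$ of $A^{\otimes3}$ to (\ref{hp2p}); a direct check in the Sweedler form (\ref{hp2}) gives $\rho\circ(\alpha\otimes\Delta)\circ\Delta=(\Delta^{op}\otimes\alpha)\circ\Delta^{op}$ and $\rho\circ(\Delta\otimes\alpha)\circ\Delta=(\alpha\otimes\Delta^{op})\circ\Delta^{op}$, so that (\ref{hp2p}) for $\Delta$ becomes exactly its $\Delta^{op}$-analogue.

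The genuinely delicate point, and where I expect the main obstacle, is the Hom-coLeibniz identity (\ref{p61}) for $\Delta^{op}$. The idea is to apply $Id_A\otimes\tau$ on the left to both sides of (\ref{p61}). On the left-hand side $(Id_A\otimes\tau)\circ(\alpha\otimes\Delta)=\alpha\otimes\Delta^{op}$, which produces $(\alpha\otimes\Delta^{op})\circ\gamma$ as wanted. On the right-hand side one recognizes, most transparently through the Sweedler computation (\ref{hp2a}), the identities $(Id_A\otimes\tau)\circ(\tau\otimes Id_A)\circ(\alpha\otimes\gamma)\circ\Delta=(\gamma\otimes\alpha)\circ\Delta^{op}$ and $(Id_A\otimes\tau)\circ(\gamma\otimes\alpha)\circ\Delta=(\tau\otimes Id_A)\circ(\alpha\otimes\gamma)\circ\Delta^{op}$, so that the two summands of (\ref{p61}) are interchanged and reproduce precisely the $\Delta^{op}$-Hom-coLeibniz identity. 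The subtle part is not any hard computation but making sure this manipulation rests only on $\Delta^{op}=\tau\circ\Delta$ and on formal properties of $\tau$, and never secretly invokes cocommutativity of $\Delta$ (which is unavailable here); confirming that is the crux of the argument.
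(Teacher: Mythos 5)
Your proof is correct and follows essentially the same route as the paper's: the $(-\Delta,-\gamma)$ case by sign parity (which the paper simply declares straightforward), and for $\Delta^{op}=\tau\circ\Delta$ the same three verifications --- commuting $\tau$ past $\alpha^{\otimes2}$ for comultiplicativity, reversing the tensor factors of the Hom-coassociativity identity, and applying $\mathrm{Id}_A\otimes\tau$ to the Hom-coLeibniz identity so that its two summands are interchanged --- which is exactly what the paper does in Sweedler notation. Your operator-level identities check out and nowhere use cocommutativity, so the argument is complete.
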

%{\color{red}
\begin{proof} 
The fact that  $(A, -\Delta, -\gamma, \alpha)$ is a non-cocommutative Hom-Poisson coalgebras 
is straightforward and   follows easily from Definition \ref{hpdd}. 
Now, let us prove that $(A, \Delta^{op}, \gamma, \alpha)$ is also  a  non-cocommutative Hom-Poisson 
coalgebra.  By hypothesis, $(A,\gamma, \alpha)$ is a Hom-Lie coalgebra. 
Thus we have to prove the relations (\ref{hp1p}), (\ref{hp2p}) and (\ref{p61}) for $\Delta^{op}$.  
Let  $x$ be in $A$.

\noindent
1) Comultiplicativity of $\Delta^{op}$:
\begin{eqnarray}
(\Delta^{op}\circ\alpha)(x)&=&\Delta^{op}(\alpha(x)) \cr 
&=&(\alpha(x))_2\otimes(\alpha(x))_1\nonumber\\
&=&\alpha(x_2)\otimes\alpha(x_1)\text{ (by (\ref{hp1})) }\nonumber\\
&=&\alpha^{\otimes2}(x_2\otimes x_1) \cr 
&=&\alpha^{\otimes2}(\Delta^{op}(x))\nonumber\\
&=&(\alpha^{\otimes2}\circ\Delta^{op})(x)\nonumber.
\end{eqnarray}
\noindent
2) Hom-coassociativity of $\Delta^{op}$: 
\begin{eqnarray}
(\alpha\otimes\Delta^{op})\circ\Delta^{op}(x) &=&(\alpha\otimes\Delta^{op})(\tau\circ\Delta(x)) \cr 
&=&(\alpha\otimes\Delta^{op})(x_2\otimes x_1) \cr 
&=&\alpha(x_2)\otimes \Delta^{op}(x_1) \cr 
&=&\alpha(x_2)\otimes \tau(\Delta(x_1))\cr 
&=&\alpha(x_2)\otimes x_{12}\otimes x_{11}\cr 
&=&x_{22}\otimes x_{21}\otimes \alpha(x_1) \text{ (by (\ref{hp2}))}\nonumber\\
&=&\tau(x_{21}\otimes x_{22})\otimes \alpha(x_1) \cr 
&=& (\tau\circ\Delta)(x_2)\otimes \alpha(x_1) \cr 
&=&\Delta^{op}(x_2)\otimes \alpha(x_1)\nonumber\\
&=&(\Delta^{op}(x_2)\otimes\alpha(x_1)) \cr 
&=&(\Delta^{op}\otimes\alpha)(x_2\otimes x_1) \cr 
&=&(\Delta^{op}\otimes\alpha)(\tau\circ\Delta(x)) \cr 
&=&(\Delta^{op}\otimes\alpha)\circ\Delta^{op}(x).\nonumber
\end{eqnarray}
\noindent
3) Hom-coLeibniz identity:
\begin{eqnarray}
 (\alpha\otimes\Delta^{op})\circ\gamma(x)\!\!&=&\alpha(x_{(1)})\otimes x_{(2)2}\otimes x_{(2)1}\nonumber\\
&=&(Id_A\otimes\tau)\Big(\alpha(x_{(1)})\otimes x_{(2)1}\otimes x_{(2)2}\Big)\nonumber\\
&=&(Id_A\otimes\tau)\Big((x_1)_{(1)}\otimes (x_1)_{(2)}\otimes\alpha(x_2)%\nonumber\\
%&&
+(x_2)_{(1)}\otimes\alpha(x_1)\otimes(x_2)_{(2)}\Big) \mbox{ \small (by (\ref{hp2a})) }\nonumber\\
&=&(x_1)_{(1)}\otimes\alpha(x_2)\otimes (x_1)_{(2)}+(x_2)_{(1)}\otimes(x_2)_{(2)}\otimes\alpha(x_1)\nonumber\\
&=&(\tau\otimes Id_A)\Big(\alpha(x_2)\otimes(x_1)_{(1)}\otimes(x_1)_{(2)}\Big)+\gamma(x_2)\otimes\alpha(x_1)\nonumber\\
&=&(\tau\otimes Id_A)\circ(\alpha\otimes\gamma)(x_2\otimes x_1)+(\gamma\otimes\alpha)(x_2\otimes x_1)\nonumber\\
&=&(\tau\otimes Id_A)\circ(\alpha\otimes\gamma)\circ\Delta^{op}(x)+(\gamma\otimes\alpha)\circ\Delta^{op}(x)\nonumber. 
\end{eqnarray}
Thus $(A, \Delta^{op}, \gamma, \alpha)$ is a non-cocommutative Hom-Poisson coalgebra. 
\end{proof}
%}
The Lemma below (see also \cite[Corollary 3.3]{bordeman-elchinger-makhlouf:twisting-poisson-copoisson-quantization}) 
allows to get a Hom-Poisson coalgebra
 (not necessarily cocommutative)  from a Poisson coalgebra (not necessarily cocommutative) via a Poisson coalgebra endomorphism.
 Let $(A,\Delta, \gamma)$ be a Poisson coalgebra  and $\alpha: A\to A$ be a Poisson coalgebra
 endomorphism. Set $\Delta_\alpha=\Delta\circ\alpha$ and  $\gamma_\alpha=\gamma\circ\alpha$. 
\begin{lemma}[\cite{bordeman-elchinger-makhlouf:twisting-poisson-copoisson-quantization}]\label{hpatt}
%  Let $(A,\Delta, \gamma)$ be a Poisson coalgebra (not necessarily commutative) and $\alpha: A\to A$ be a Poisson coalgebra
%  endomorphism. Then 
The quadruple $(A,\Delta_\alpha, \gamma_\alpha, \alpha)$  is a Hom-Poisson coalgebra (not necessarily commutative). 
\end{lemma}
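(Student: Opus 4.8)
The plan is to verify, for the twisted maps $\Delta_\alpha=\Delta\circ\alpha$ and $\gamma_\alpha=\gamma\circ\alpha$, each of the seven defining conditions of a (non-cocommutative) Hom-Poisson coalgebra listed in Definition \ref{hpdd}: the comultiplicativity (\ref{hp1p}) and Hom-coassociativity (\ref{hp2p}) of $\Delta_\alpha$, the skew-cosymmetry (\ref{p31}), comultiplicativity (\ref{p41}) and Hom-coJacobi identity (\ref{p51}) for $\gamma_\alpha$, and finally the Hom-coLeibniz identity (\ref{p61}) relating $\Delta_\alpha$ and $\gamma_\alpha$. The only facts about $\alpha$ I would invoke are that it is a coalgebra endomorphism for both structures, i.e. $\Delta\circ\alpha=\alpha^{\otimes2}\circ\Delta$ and $\gamma\circ\alpha=\alpha^{\otimes2}\circ\gamma$, together with the $\alpha=Id_A$ instances of all the axioms, which hold because $(A,\Delta,\gamma)$ is an ordinary Poisson coalgebra (coassociativity of $\Delta$, skew-cosymmetry and coJacobi identity of $\gamma$, and the coLeibniz identity).

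First I would dispose of the two immediate conditions. Skew-cosymmetry is instant: $-\tau\circ\gamma_\alpha=(-\tau\circ\gamma)\circ\alpha=\gamma\circ\alpha=\gamma_\alpha$ by (\ref{p31}). For comultiplicativity, the endomorphism property gives that both $\Delta_\alpha\circ\alpha$ and $\alpha^{\otimes2}\circ\Delta_\alpha$ equal $\Delta\circ\alpha^{2}$, and the same computation works for $\gamma_\alpha$; this yields (\ref{hp1p}) and (\ref{p41}) for the twisted maps.

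The heart of the argument is a single observation that settles the three remaining, more involved identities uniformly. Writing the twisted coproducts in Sweedler form via multiplicativity, one has $\Delta_\alpha(x)=\sum\alpha(x_1)\otimes\alpha(x_2)$ and $\gamma_\alpha(x)=\sum\alpha(x_{(1)})\otimes\alpha(x_{(2)})$, and iterating once more every doubly-applied expression collects exactly two powers of $\alpha$ on each tensor slot. Thus $(\alpha\otimes\Delta_\alpha)\circ\Delta_\alpha$ and $(\Delta_\alpha\otimes\alpha)\circ\Delta_\alpha$ become $(\alpha^{2})^{\otimes3}$ applied to $(Id_A\otimes\Delta)\circ\Delta$ and $(\Delta\otimes Id_A)\circ\Delta$, so Hom-coassociativity (\ref{hp2p}) reduces to the plain coassociativity of $\Delta$ with $(\alpha^{2})^{\otimes3}$ applied on the outside. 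The same bookkeeping turns the left-hand side of (\ref{p51}) into $(\alpha^{2})^{\otimes3}\circ(Id_A+\sigma+\sigma^{2})\circ(Id_A\otimes\gamma)\circ\gamma$ — here one uses that the permutation $\sigma$ commutes with the diagonal operator $(\alpha^{2})^{\otimes3}$ — which vanishes by the untwisted coJacobi identity.

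The Hom-coLeibniz identity (\ref{p61}) is the step I expect to require the most careful bookkeeping, being the only identity that mixes $\Delta_\alpha$, $\gamma_\alpha$ and the flip $\tau$, so one must track which of the two $\alpha$-powers lands on each of the three output slots in all three terms. Expanding by the same Sweedler procedure, the left-hand side becomes $(\alpha^{2})^{\otimes3}$ applied to $\sum x_{(1)}\otimes x_{(2)1}\otimes x_{(2)2}$, while the two right-hand terms become $(\alpha^{2})^{\otimes3}$ applied to $\sum(x_1)_{(1)}\otimes(x_1)_{(2)}\otimes x_2$ and to $\sum(x_2)_{(1)}\otimes x_1\otimes(x_2)_{(2)}$; since $(\alpha^{2})^{\otimes3}$ again commutes with $\tau\otimes Id_A$, the identity collapses to the untwisted coLeibniz identity (that is (\ref{p61}) at $\alpha=Id_A$, equivalently (\ref{hp2a}) with $\alpha=Id_A$) with $(\alpha^{2})^{\otimes3}$ applied on the outside. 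Having verified all seven conditions, I would conclude that $(A,\Delta_\alpha,\gamma_\alpha,\alpha)$ is a non-cocommutative Hom-Poisson coalgebra; and if moreover $\Delta$ is cocommutative, then $\Delta_\alpha=\tau\circ\Delta_\alpha$ follows at once from (\ref{eq:cocommutativity-Delta}) by the same immediate argument used for skew-cosymmetry.
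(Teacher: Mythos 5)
Your proof is correct: each of the twisted identities reduces, via the endomorphism property $\Delta\circ\alpha=\alpha^{\otimes2}\circ\Delta$ and $\gamma\circ\alpha=\alpha^{\otimes2}\circ\gamma$, to $(\alpha^{2})^{\otimes3}$ (or $\alpha^{2}$, $(\alpha^{2})^{\otimes2}$) applied to the corresponding untwisted axiom, and the commutation of $(\alpha^{2})^{\otimes3}$ with $\sigma$ and $\tau\otimes Id_A$ is exactly the point that makes the coJacobi and coLeibniz cases go through. The paper itself states this lemma without proof, citing \cite{bordeman-elchinger-makhlouf:twisting-poisson-copoisson-quantization}, so there is no in-text argument to compare against; your computation is the standard Yau-type twisting argument and is consistent in style with the Sweedler-notation verifications the paper does carry out elsewhere (e.g.\ in Lemma \ref{+}).
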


\section{Modules over left Hom-alternative algebras \label{sec:modules-alternative-algebras}}
% {\color{red} Faire une petite introduction de la section \'enon\c{c}ant l'objectif}
In this section, we introduce modules over left Hom-alternative algebras. We prove that they are close under twisting. Then
we show that left alternative algebras morphisms extend to morphisms of left Hom-alternative algebras and we give a relationship
between modules over left Hom-alternative algebras and modules over right Hom-alternative algebras.
% {\color{red} N y a-t-il pas lieu de parler de {\bf left Hom-alternative module} et de \\ {\bf right Hom-alternative module} et ensuite 
% de {\bf Hom-alternative module}}. 

% Oh non. Je ne pense pas. Cela me fait plutot penser aux bimodules over left-Hom-alternative algebras.
% Vous pouvez quand meme le definir puisqu'on en parle dans la proposition 3.2 mais c'est pas la peine d'étudier ses propriétés.
\begin{definition}\label{mhla}
Let $(A, \mu,\alpha)$ be a left (resp. right) Hom-alternative algebra  and  $(M, \beta)$ be a Hom-module. 
An $A$-module structure on $M$ consists of a {\bf K}-bilinear map
 $\mu_M :A\otimes M\to M$ (resp. $\mu_M : M\otimes A\rightarrow M$) such that for any $x, y\in A$ and every $m\in M$,
\begin{eqnarray}
 \mu_M\Big(\alpha(x), \mu_M(x, m)\Big)\!=\!\mu_M\Big(\mu(x, x), \beta(m)\Big), \text{ resp. } 
 \mu_M\Big(\mu_M(m, x), \alpha(x)\Big)\!=\! \mu_M\Big(\beta(m), \mu(x, x)\Big).  \label{hai1}
\end{eqnarray}
% respectively,
% \begin{eqnarray}
% \mu_M\Big(\mu_M(m, x), \alpha(x)\Big)= \mu_M\Big(\beta(m), \mu(x, x)\Big). \label{hai11}
% \end{eqnarray}
If $M$ carries a $A$-module structure, then it is called a module over $A$ or an $A$-module.
\end{definition}
\begin{example} {\normalfont
Any left (resp. right) Hom-alternative algebra is a module over itself.
}
\end{example}
\begin{example}{\normalfont
 Let $(A, \mu,\alpha)$ be a multiplicative left Hom-alternative algebra. Then $(A, \alpha)$ is an $A$-module with the action
$x\ast m=\mu(\alpha(x), m)$ for any $x, m\in A$.
In fact, we have
\begin{eqnarray}
 \alpha(x)\ast(x\ast m)&=&\alpha(x)\ast\mu(\alpha(x),m) \cr 
                       &=&\mu\Big(\alpha^2(x), \mu\big(\alpha(x), m\big)\Big)\nonumber\\
&=&\mu\Big(\mu\big(\alpha(x), \alpha(x)\big), \alpha(m)\Big)\cr 
&=&\mu\Big(\alpha\big(\mu(x, x)\big), \alpha(m)\Big)\nonumber\\
&=&\mu(x, x)\ast\alpha(m).\nonumber
\end{eqnarray}}
\end{example}
\begin{definition}
 Let $(M, \mu_M, \beta)$ and $(M', \mu_{M'}, \beta')$ be two modules over a left (resp. right) Hom-alternative algebra 
 $(A, \mu, \alpha)$. A linear map $f : M\rightarrow M'$ is said to be a morphism of left (resp. right) Hom-alternative modules if, 
 for any $x\in A$ and every $m\in M$,
 \begin{equation}
  f\Big(\mu_M(x, m)\Big)=\mu_{M'}\Big(x, f(m)\Big) \quad  \Big[\text{resp. }  
  f\Big(\mu_{M}(m, x)\Big)=\mu_{M'}\Big(f(m), x\Big)\Big].
 \end{equation}
\end{definition}
\begin{example}{\normalfont
 Any morphism of left (resp. right) Hom-alternative algebras is a morphism of left (resp. right) Hom-alternative modules.}
\end{example}
\begin{definition}
Let $(M, \mu_M, \beta)$ be a Module over a left Hom-alternative algebra $(A, \mu,\alpha)$.  
We call {\it module Hom-associator} associated to the
module $M$, the trilinear map $\hbox{\rm as}(\alpha, \beta)$, defined by: for all $x,y \in A$ and $m\in M$, 
\begin{equation}
\hbox{\rm as}(\alpha, \beta)(x, y, m)=\mu_M\Big(\alpha(x), \mu_M(y, m)\Big)-\mu_M\Big(\mu(x, y), \beta(m)\Big). \label{hai3}
\end{equation}
\end{definition}
Using the {\it module Hom-associator}, the condition (\ref{hai1}) may be written  as: %$\forall x\in A, m\in M$,
\begin{equation}
\hbox{\rm as}(\alpha, \beta)(x, x, m)= 0. \label{hai4}
\end{equation}
\begin{remark}
The Hom-associator $\hbox{\rm as}_\alpha$, associated to a Hom-associative algebra $(A, \mu, \alpha)$
 is obtained by taking  $\beta=\alpha$ in (\ref{hai3}).
\end{remark}
%Now, let us fixe some notations. 
\begin{theorem}
 Let $(A, \mu,\alpha)$ be a left Hom-alternative algebra;   $(M, \mu_M, \beta_M)$ and $(N, \mu_N, \beta_N)$ be two $A$-modules.
Define the bilinear map $\rho : A\otimes M\otimes N\rightarrow M\otimes N$ and the linear map $\beta : M\otimes N\rightarrow M\otimes N$ as
\begin{equation}
\rho(x\otimes m\otimes n)=\mu_M(x, m)\otimes \mu_N(x, n) \; \mbox{ and } \;  \beta(m\otimes n)=\beta_M(m)\otimes\beta_N(n). 
\end{equation}
Then $(M\otimes N, \rho, \beta)$ is an $A$-module.
\end{theorem}
\begin{proof}
 For any $x, y\in A$ and every $ m, n\in M$, one has:
\begin{eqnarray}
 \rho\circ(\mu\otimes\beta)(x\otimes y\otimes m\otimes n)
&=&\rho\Big(\mu(x, y)\otimes\beta_M(m)\otimes\beta_N(n)\Big)\nonumber\\
&=&\mu_M\Big(\mu(x, y),\beta_M(m)\Big)\otimes\mu_N\Big(\mu(x, y),\beta_N(n)\Big)\nonumber\\
&=&\mu_M\Big(\alpha(x), \mu_M(y, m)\Big)\otimes\mu_N\Big(\alpha(x), \mu_N(y, n)\Big)\nonumber\\
&=&\rho\Big(\alpha(x)\otimes\mu_M(y, m)\otimes\mu_N(y, n)\Big)\nonumber\\
&=&\rho\Big(\alpha(x)\otimes \rho(y\otimes m\otimes n)\Big)\nonumber\\
&=&\rho\circ(\alpha\otimes\rho)(x\otimes y\otimes m\otimes n).\nonumber
\end{eqnarray}
This ends the proof.
\end{proof}
For the rest of this section, $(M,\mu_M,\beta)$ is a module over a left Hom-alternative algebra
$(A,\mu,\alpha)$ and  we set $\tilde\mu_M=\mu_M\circ(\alpha^2\otimes Id_M)$. 
\begin{proposition} 
 %Let $(M, \mu_M, \beta)$ be a module over the multiplicative left Hom-alternative algebra $(A, \mu, \alpha)$. Then
The triple $(M, \tilde\mu_M, \beta)$ is another module over $A$.
%  the left Hom-alternative algebra $(A, \mu, \alpha)$.
\end{proposition}
\begin{proof}
Using $\mu_M\Big(\alpha(x), \mu_M(x, m)\Big)=\mu_M\Big(\mu(x, x), \beta(m)\Big)$, one has
\begin{eqnarray}
 \tilde\mu_M\Big(\alpha(x), \tilde\mu_M(x, m)\Big) &=& \tilde\mu_M\Big(\alpha(x), \mu_M(\alpha^2(x), m)\Big)\nonumber\\
                                                   &=& \mu_M\Big(\alpha^3(x),\mu_M\big(\alpha^2(x), m\big)\Big)\nonumber\\
                                                   &=& \mu_M\Big(\mu\big(\alpha^2(x), \alpha^2(x)\big), \beta(m)\Big) \nonumber\\
                                                   &=& \mu_M\Big(\alpha^2\mu(x, x), \beta(m)\Big) \nonumber\\ 
                                                   &=& \tilde\mu_M\Big(\mu(x, x), \beta(m)\Big)\nonumber.
\end{eqnarray}
This completes the proof.
\end{proof}
% \begin{corollary}
%  g
% \end{corollary}

% \begin{corollary}
%  Let $(A,\alpha\circ \mu,\alpha)$ be a left Hom-alternative algebra (as in Lemma \ref{lhatl}) and $(M, \mu_M, \beta)$ 
%  a module over $(A,\alpha\circ \mu,\alpha)$.  Then $(M, \tilde\mu_M, \beta)$ is also a module over $(A,\alpha\circ \mu,\alpha)$.
% \end{corollary}
% \begin{proof}
%  .................................................
%  
%  \vskip 3cm 
%  
% \end{proof}
%\begin{remark} 
 We have a similar formulation for modules over right Hom-alternative algebras.
%\end{remark}
\begin{theorem}\label{morfi}
Let $f : (M, \mu_M, \beta)\rightarrow (M', \mu_{M'}, \beta')$ be a morphism of modules over a left Hom-alternative algebra
$(A, \mu, \alpha)$. Then, $f : (M, \tilde\mu_M, \beta)\rightarrow (M', \tilde\mu_{M'}, \beta')$ %, 
% where $\tilde\mu_M=\mu_M\circ(\alpha^2\otimes Id_M)$,
%  $\tilde\mu_{M'}=\mu_{M'}\circ(\alpha^2\otimes Id_M)$,
is a morphism of modules over the left Hom-alternative algebra $(A, \mu, \alpha)$.
\end{theorem}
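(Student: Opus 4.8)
The plan is to verify directly the single defining identity of a module morphism for the twisted structure maps, namely that $f\circ\tilde\mu_M=\tilde\mu_{M'}\circ(Id_A\otimes f)$, starting from the corresponding identity for the untwisted maps. Since the twisting changes only the action map and not the structural endomorphism (the target module carries the same $\beta'$ as before), no new compatibility with $\beta,\beta'$ has to be established; the whole content of the statement collapses to one short chain of equalities.

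First I would unwind the definition $\tilde\mu_M=\mu_M\circ(\alpha^2\otimes Id_M)$, so that $\tilde\mu_M(x,m)=\mu_M(\alpha^2(x),m)$ for all $x\in A$, $m\in M$, and likewise $\tilde\mu_{M'}(x,m')=\mu_{M'}(\alpha^2(x),m')$. Then, for arbitrary $x\in A$ and $m\in M$, I would compute
\[
f\big(\tilde\mu_M(x,m)\big)=f\big(\mu_M(\alpha^2(x),m)\big).
\]
Applying the morphism hypothesis $f\big(\mu_M(a,m)\big)=\mu_{M'}\big(a,f(m)\big)$ with the particular element $a=\alpha^2(x)\in A$ turns the right-hand side into $\mu_{M'}\big(\alpha^2(x),f(m)\big)$, which is precisely $\tilde\mu_{M'}\big(x,f(m)\big)$. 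This yields the morphism identity for the twisted actions.

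The argument is entirely formal, so there is no genuine obstacle; the only subtlety worth flagging is that the morphism property is invoked at the shifted argument $\alpha^2(x)$ rather than at $x$ itself, which is legitimate because the defining identity of a module morphism holds for \emph{every} element of $A$. I would close by observing that the analogous statement for modules over right Hom-alternative algebras follows verbatim, with the left action replaced by the right action and the twisting $\mu_M\circ(\alpha^2\otimes Id_M)$ replaced by $\mu_M\circ(Id_M\otimes\alpha^2)$.
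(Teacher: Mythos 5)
Your proposal is correct and coincides with the paper's own proof: both unwind $\tilde\mu_M(x,m)=\mu_M(\alpha^2(x),m)$ and apply the morphism identity $f\circ\mu_M=\mu_{M'}\circ(Id_A\otimes f)$ at the shifted argument $\alpha^2(x)$ to obtain $\tilde\mu_{M'}(x,f(m))$. No further comment is needed.
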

\begin{proof}
Using the equality $f\circ\mu_M=\mu_{M'}\circ(Id_A\otimes f)$, one has for any $x\in A$ and all $m\in M$,
$
 f\Big(\tilde\mu_M(x, m)\Big)=f\Big(\mu_M(\alpha^2(x), m)\Big)=\mu_{M'}\Big(\alpha^2(x), f(m)\Big)=\tilde\mu_{M'}\Big(x, f(m)\Big)
$.
\end{proof}
% \begin{corollary}
% Let $f : (M, \mu_M, \beta)\rightarrow (M', \mu_{M'}, \beta')$ be a morphism of modules over a left alternative algebra $(A, \mu, Id_A)$ and 
% $\alpha : A\rightarrow A$ an algebra endomorphism.
% Then, $f : (M, \tilde\mu_M, \beta)\rightarrow (M', \tilde\mu_{M'}, \beta')$
% is a morphism of modules over the left Hom-alternative algebra $(A,\alpha\circ \mu,\alpha)$.
% \end{corollary}
% \begin{proof}
%  .................................;
%  
%  \vskip 3cm 
%  
% \end{proof}
% {\color{red} La Proposition \ref{pro:cns-module-left-hom-alternative} ne devrait-elle pas donner une condition n\'ecessaire et suffisante
% pour le triplet $(A,\mu_M,\beta)$ d'\^etre module sur l'alg\`ebre hom-alternative \`a gauche $(A,\mu,\alpha)$ 
% (analogie avec Proposition \ref{pro:cns-hom-alternative-algebra}) ?}
% Je ne crois pas encore à ça!!!!!!
\begin{proposition}\label{pro:cns-module-left-hom-alternative}
%Let $(M, \mu_M, \beta)$ be a module over the left Hom-alternative algebra $(A, \mu,\alpha)$. Then, 
We have the following identity: 
for any  $x, y\in A$ and any $m\in M$,
\begin{eqnarray}
\mu_M\Big(\alpha(x), \mu_M(y, m)\Big)-\mu_M\Big(\mu(x, y), \beta(m)\Big) +\mu_M\Big(\alpha(y), \mu_M(x, m)\Big)
-\mu_M\Big(\mu(y, x), \beta(m)\Big)=0. \label{had}
\end{eqnarray}
\end{proposition}
\begin{proof}
By assumption $(M, \mu_M, \beta)$ is a left Hom-alternative module {\it i.e.}
$\hbox{\rm as}(\alpha, \beta)(x, x, m)=0$, for any $x \in A$ and $m\in M$. The relation (\ref{had})
 follows by expanding $\hbox{\rm as}(\alpha, \beta)(x+y, x+y, m)=0$, for any $x, y\in A$ and any  $m\in M$.
\end{proof}
\begin{remark}
  The condition (\ref{had}) is equivalent to $\hbox{\rm as}(\alpha, \beta)(x, y, m)+\hbox{\rm as}(\alpha, \beta)(y, x, m)=0$, 
  for all $x,y \in A$ and $m\in M$. Now, taking
$\hbox{\rm as}(\alpha, \beta)(x, y, m)-\hbox{\rm as}(\alpha, \beta)(y, x, m)=0$, we obtain the relation 
connecting $A$-modules and $L(A)$-modules (\cite{BI}), 
where $L(A)$ is the Hom-Lie algebra associated to the Hom-associative algebra $A$.
\end{remark}
Here is an extension of Lemma \ref{-t}.
 \begin{proposition}
%Let $(M, \mu_M, \beta)$ be a module over the left Hom-alternative algebra $(A, \mu,\alpha)$. Then,
% \begin{enumerate}
%  \item
 $(M,\!-\mu_M, \beta)$ is  a module over the left Hom-alternative algebra $(A,\!-\mu,\alpha)$.
% \item $(M, \mu_M^{op}, \beta)$ is a module over the right Hom-alternative algebra $(A, \mu^{op},\alpha)$.
% \end{enumerate}
\end{proposition}
\begin{proof}
 The proof is straightforward and comes from Lemma \ref{-t} and Definition \ref{mhla}.
\end{proof}

\section{Hom-Poisson comodules \label{sec:hom-poisson-comodules}}

In this section, we construct some Poisson comodules and prove that twisting the module structure map of a Hom-Poisson comodule 
we get another one.

\subsection{Hom-(coassociative) Lie comodules}

\begin{definition}[\cite{zhang:comodule-hom-coalgebras}]
Let $(C, \delta, \alpha)$ be a Hom-coassociative coalgebra  and  $(M, \beta)$ be a Hom-module. 
 A $C$-comodule structure on $M$ consists of a linear map (called structure map) 
 $\Delta: M\longrightarrow C\otimes M$, $m\mapsto\sum m_{(-1)}\otimes m_{(0)}$ such that
$\Delta\circ\beta=(\alpha\otimes\beta)\circ\Delta$  and 
$(\alpha\otimes\Delta)\circ\Delta=(\delta\otimes \beta)\circ\Delta$. 
% \begin{eqnarray}
% \Delta\circ\beta&=&(\alpha\otimes\beta)\circ\Delta, \label{moascds}\\
%  (\alpha\otimes\Delta)\circ\Delta&=&(\delta\otimes \beta)\circ\Delta. \label{ami}
% \end{eqnarray}
\end{definition}
% 
% \begin{definition}[\cite{zhang:comodule-hom-coalgebras}]
% Let $(C, \delta, \alpha)$ be a Hom-coassociative coalgebra  and  $(M, \beta)$ be a Hom-module. 
%  A $C$-comodule structure on $M$ consists of a linear map $\Delta: M\to C\otimes M$, $m\mapsto\sum m_{(-1)}\otimes m_{(0)}$ such that
% {\color{blue}
% \begin{eqnarray}
% (\alpha\otimes\Delta)\circ \Delta=(\delta\otimes\beta)\circ \Delta.
% \end{eqnarray}
% }
% {\color{red}
% ET NON  
% \begin{eqnarray}
% \delta\circ\beta&=&(\alpha\otimes\beta)\circ\delta \label{moascds}\\
%  (\alpha\otimes\delta)\circ\delta&=&(\delta\otimes \beta)\circ\delta. \label{ami}
% \end{eqnarray}
% }
% \end{definition}
% {\color{red} Cette d\'efinition m'a l'air inexacte car $\Delta$ ne joue aucun r\^ole. Il faut, me semble-t-il, 
% enlever les deux \'equation en rouge. Voir \cite{zhang:comodule-hom-coalgebras}}
\begin{lemma}[\cite{zhang:comodule-hom-coalgebras}]\label{smc}
 Consider  a Hom-coassociative coalgebra $(C, \delta, \alpha)$  and a $C$-comodule $(M,\beta)$  with structure map
 $\Delta: M\rightarrow C\otimes M$.  Then, the map  $\tilde\Delta=(\alpha^2\otimes Id_M)\circ\Delta : M\to C\otimes M$ is  a structure map 
 of another $C$-comodule structure on $M$.
\end{lemma}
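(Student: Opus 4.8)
The plan is to verify that the twisted map $\tilde\Delta = (\alpha^2\otimes Id_M)\circ\Delta$ satisfies the two comodule axioms (\ref{moascds}) and (\ref{ami}) for the same coalgebra $(C,\delta,\alpha)$ and the same Hom-module $(M,\beta)$. This is a direct computation that closely parallels the proof of Lemma 3.1 for modules over Hom-alternative algebras, where $\tilde\mu_M=\mu_M\circ(\alpha^2\otimes Id_M)$ was checked against the module axiom. The two ingredients I will repeatedly use are: that $\alpha$ is comultiplicative for $\delta$ (so $\delta\circ\alpha=\alpha^{\otimes 2}\circ\delta$), and the two original comodule axioms satisfied by $\Delta$.

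First I would check the compatibility with $\beta$, that is equation (\ref{moascds}) for $\tilde\Delta$. Starting from $\tilde\Delta\circ\beta=(\alpha^2\otimes Id_M)\circ\Delta\circ\beta$, I apply the original axiom $\Delta\circ\beta=(\alpha\otimes\beta)\circ\Delta$ to replace $\Delta\circ\beta$, obtaining $(\alpha^2\otimes Id_M)\circ(\alpha\otimes\beta)\circ\Delta=(\alpha^3\otimes\beta)\circ\Delta$. On the other hand, $(\alpha\otimes\beta)\circ\tilde\Delta=(\alpha\otimes\beta)\circ(\alpha^2\otimes Id_M)\circ\Delta=(\alpha^3\otimes\beta)\circ\Delta$ as well, so the two sides agree and (\ref{moascds}) holds for $\tilde\Delta$.

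The second step is the coassociativity-type axiom (\ref{ami}). I would expand the left-hand side $(\alpha\otimes\tilde\Delta)\circ\tilde\Delta=(\alpha\otimes(\alpha^2\otimes Id_M)\Delta)\circ(\alpha^2\otimes Id_M)\circ\Delta$, pull the outer $\alpha^2$ through by functoriality of the tensor product, and then invoke the original relation $(\alpha\otimes\Delta)\circ\Delta=(\delta\otimes\beta)\circ\Delta$ to rewrite the nested $\Delta$'s. For the right-hand side $(\delta\otimes\beta)\circ\tilde\Delta=(\delta\otimes\beta)\circ(\alpha^2\otimes Id_M)\circ\Delta=(\delta\alpha^2\otimes\beta)\circ\Delta$, I would use comultiplicativity $\delta\circ\alpha^2=\alpha^{\otimes 2}\circ\delta\circ\alpha=(\alpha^2)^{\otimes 2}\circ\delta$ (applying (\ref{hp1p})-type comultiplicativity twice) to align the $\alpha$-powers on the two legs of $\delta$.

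The main obstacle I expect is purely bookkeeping: tracking the correct powers of $\alpha$ on each tensor factor when the twisting $\alpha^2$ is pushed through the coproduct $\delta$. The crux is that comultiplicativity lets $\alpha^2$ commute past $\delta$ as $(\alpha^2\otimes\alpha^2)$, and this is exactly what makes the extra $\alpha$-factors introduced by $\tilde\Delta$ on both sides match up after using the original axiom (\ref{ami}). Once the $\alpha$-powers are reconciled, the identity follows, establishing that $(M,\tilde\Delta,\beta)$ is a $C$-comodule.
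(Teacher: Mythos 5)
Your proof is correct: the paper itself states this lemma without proof (citing Zhang), but your computation is exactly the method the paper uses for the analogous Hom-Lie comodule statement, Lemma \ref{csml} --- verify compatibility with $\beta$ by collapsing both sides to $(\alpha^{3}\otimes\beta)\circ\Delta$, then push $(\alpha^{2})^{\otimes 2}\otimes Id_M$ through the original coassociativity axiom and use comultiplicativity $\delta\circ\alpha=\alpha^{\otimes 2}\circ\delta$ to reassemble $(\delta\otimes\beta)\circ\tilde\Delta$. Your appeal to comultiplicativity is legitimate here because the paper's definition of a Hom-coassociative coalgebra (Definition \ref{hpdd}, item i)) includes it, so there is no gap.
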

The following definition is the Hom-type of the one given in \cite{ZLY}.
\begin{definition}[\cite{BI}]
 Let $(L, \gamma, \alpha)$ be a Hom-Lie coalgebra and $(M, \beta)$ be a Hom-module. If there exists a linear map 
$\Gamma : M\to L\otimes M$ such that 
\begin{eqnarray}
\Gamma\circ\beta=(\alpha\otimes\beta)\circ\Gamma \text{ and } 
 (\gamma\otimes \beta)\circ\Gamma=(\alpha\otimes \Gamma)\circ\Gamma
 -(\tau\otimes Id_M)\circ(\alpha\otimes\Gamma)\circ\Gamma, \label{mod}
\end{eqnarray}
% \begin{eqnarray}
% \Gamma\circ\beta&=&(\alpha\otimes\beta)\circ\Gamma, \label{modh}\\
%  (\gamma\otimes \beta)\circ\Gamma&=&(\alpha\otimes \Gamma)\circ\Gamma-(\tau\otimes Id_L)\circ(\alpha\otimes\Gamma)\circ\Gamma, \label{mod}
% \end{eqnarray}
then $M$ is called a $L$-comodule. %{\color{red}$Id_M$ ou $Id_L$}
\end{definition}
\begin{remark} \mbox{}\\
1) If $\alpha=Id_L$ and $\beta=Id_M$, we recover the definition of Lie comodules (\cite{ZLY}).\\
2) The conditions  (\ref{mod}) may be respectively written as
\begin{eqnarray}
 (\beta(v))_{(-1)}\otimes(\beta(v))_{(0)}\!\!\!\!&=&\!\!\!\!\alpha(v_{(-1)})\otimes \beta(v_{(0)}), \label{expm1}\\
v_{(-1)(1)}\otimes v_{(-1)(2)}\otimes\beta(v_{(0)})
\!\!\!\!&=&\!\!\!\!\alpha(v_{(-1)})\otimes v_{(0)(-1)}\otimes v_{(0)(0)}-v_{(0)(-1)} \otimes\alpha(v_{(-1)})\otimes v_{(0)(0)},\label{expm2}
\end{eqnarray}
where  $\gamma(x)=x_{(1)}\otimes x_{(2)}$ and $\Gamma(v)=v_{(-1)}\otimes v_{(0)}$ (without summation symbol, for symplicity).
\end{remark}
The following statement is the Lie-type of Lemma \ref{smc}.
\begin{lemma}\label{csml}
 Let $(L, \gamma,\alpha)$ be a Hom-Lie coalgebra and $M$ be a $L$-comodule with structure map
 $\Gamma : M\longrightarrow L\otimes M$. Define the map 
$\tilde\Gamma=(\alpha^2\otimes Id_M)\circ\Gamma : M \to L\otimes M$.
Then $\tilde\Gamma$ is a structure map of another $L$-comodule structure on $M$.
\end{lemma}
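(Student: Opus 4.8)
The plan is to follow the same strategy as in the proof of Lemma \ref{smc}, verifying that the pair $(M,\tilde\Gamma,\beta)$ --- with the \emph{same} map $\beta$ --- satisfies the two $L$-comodule axioms (\ref{modh}) and (\ref{mod}) with $\Gamma$ replaced by $\tilde\Gamma=(\alpha^2\otimes Id_M)\circ\Gamma$. The two ingredients I expect to use are the comultiplicativity of $\gamma$, namely $\gamma\circ\alpha=\alpha^{\otimes2}\circ\gamma$ from (\ref{p41}), and the compatibility (\ref{modh}) of the original comodule map with $\beta$.

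First I would check the compatibility (\ref{modh}) for $\tilde\Gamma$, which is immediate:
\[
\tilde\Gamma\circ\beta=(\alpha^2\otimes Id_M)\circ\Gamma\circ\beta=(\alpha^2\otimes Id_M)\circ(\alpha\otimes\beta)\circ\Gamma=(\alpha^3\otimes\beta)\circ\Gamma,
\]
while on the other side $(\alpha\otimes\beta)\circ\tilde\Gamma=(\alpha\otimes\beta)\circ(\alpha^2\otimes Id_M)\circ\Gamma=(\alpha^3\otimes\beta)\circ\Gamma$, so the two expressions agree.

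The substantive step is the coJacobi/co-Leibniz--type identity (\ref{mod}) for $\tilde\Gamma$. My approach is to apply the operator $\alpha^2\otimes\alpha^2\otimes Id_M$ to both sides of the identity (\ref{mod}) satisfied by $\Gamma$ and then recognise the result as exactly (\ref{mod}) for $\tilde\Gamma$. On the left-hand side I would write $(\gamma\otimes\beta)\circ\tilde\Gamma=(\gamma\circ\alpha^2\otimes\beta)\circ\Gamma$ and push $\alpha^2$ through $\gamma$ by applying (\ref{p41}) twice, namely $\gamma\circ\alpha^2=\alpha^{\otimes2}\circ\alpha^{\otimes2}\circ\gamma=(\alpha^2\otimes\alpha^2)\circ\gamma$; this turns the left-hand side into $\alpha^2\otimes\alpha^2\otimes Id_M$ applied to $(\gamma\otimes\beta)\circ\Gamma$. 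On the right-hand side, writing $\Gamma(m)=\sum m_{(-1)}\otimes m_{(0)}$, the term $(\alpha\otimes\tilde\Gamma)\circ\tilde\Gamma$ produces $\sum\alpha^3(m_{(-1)})\otimes\alpha^2((m_{(0)})_{(-1)})\otimes (m_{(0)})_{(0)}$, which is precisely $\alpha^2\otimes\alpha^2\otimes Id_M$ applied to $(\alpha\otimes\Gamma)\circ\Gamma$, and the summand built from $\tau$ transforms in the same way because the twist acts only on the first two tensor legs. Hence applying $\alpha^2\otimes\alpha^2\otimes Id_M$ to (\ref{mod}) for $\Gamma$ yields (\ref{mod}) for $\tilde\Gamma$ term by term.

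I expect the only delicate point to be the bookkeeping of the $\tau$-twisted summand: one must confirm that conjugating by $\tau$ commutes with applying $\alpha^2$ to each of the first two factors, so that the distribution of the powers $\alpha^3$ and $\alpha^2\circ(\ )_{(-1)}$ over the two swapped legs matches on both sides rather than becoming mismatched. Once comultiplicativity (\ref{p41}) has been used to normalise all the powers of $\alpha$ sitting on the coproduct legs, this reduces to a routine symmetry check, and the lemma follows.
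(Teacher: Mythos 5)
Your proposal is correct and follows essentially the same route as the paper's proof: verify (\ref{modh}) by commuting $\alpha^2\otimes Id_M$ past $\alpha\otimes\beta$, and obtain (\ref{mod}) for $\tilde\Gamma$ by applying $(\alpha^2)^{\otimes2}\otimes Id_M$ to the identity for $\Gamma$, using comultiplicativity (\ref{p41}) twice on the left-hand side and the fact that $(\alpha^2)^{\otimes2}\otimes Id_M$ commutes with $\tau\otimes Id_M$ on the right. The ``delicate point'' you flag about the $\tau$-twisted summand is exactly the bookkeeping the paper carries out explicitly in Sweedler-type notation, and it checks out.
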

\begin{proof}
For any $m\in M$, we have on one hand:
\begin{eqnarray}
 (\tilde\Gamma\circ\beta)(m) &=&(\alpha^2\otimes Id_M)\circ\Gamma(\beta(m))\nonumber\\
                             &=&(\alpha^2\otimes Id_M)\circ(\alpha\otimes\beta)\circ\Gamma(m)\cr 
                             &=&(\alpha^3\otimes\beta)\circ\Gamma(m)\nonumber\\
                             &=&(\alpha\otimes\beta)\circ(\alpha^2\otimes Id_M)\circ\Gamma(m)\cr 
                             &=&(\alpha\otimes\beta)\circ\tilde\Gamma(m);\nonumber
\end{eqnarray}
and on the other hand: %{\color{red} $Id_M$ ou $Id_L$}
\begin{eqnarray}
(\gamma\otimes\beta)\circ\tilde\Gamma(m)\!\!\!\!&=&\!\!\!\!(\gamma\otimes\beta)\circ(\alpha^2\otimes Id_M)\circ\Gamma(m) \cr 
&=&\!\!\!\!(\gamma\circ\alpha^2\otimes\beta)\circ\Gamma(m)\cr 
&=&\!\!\!\!\Big((\alpha^2)^{\otimes2}\otimes Id_M\Big)\circ(\gamma\otimes\beta)\circ\Gamma(m) \text{ (using (\ref{p41}))}\nonumber\\
&=&\!\!\!\! \Big((\alpha^2)^{\otimes2}\otimes Id_M\Big)\Big[(\alpha\otimes\Gamma)\circ\Gamma(m)
   -(\tau\otimes Id_M)\circ(\alpha\otimes\Gamma)\circ\Gamma(m)\Big]\nonumber\\
&=&\!\!\!\!\Big((\alpha^2)^{\otimes2}\!\otimes\! Id_M\Big)\Big[(\alpha\!\otimes\!\Gamma)(m_{(-1)}\!\otimes\! m_{(0)})
   \!-\!(\tau\!\otimes\! Id_M)(\alpha(m_{(-1)})\!\otimes\!\Gamma(m_{(0)}))\Big]\nonumber\\
&=&\!\!\!\!\Big((\alpha^2)^{\otimes2}\!\otimes\! Id_M\Big)\Big[\alpha(m_{(-1)})\!\otimes\! m_{(0)(-1)}\!\otimes\! m_{(0)(0)}
   \!-\! m_{(0)(-1)}\!\otimes\! \alpha(m_{(-1)})\!\otimes\! m_{(0)(0)}\Big]\nonumber\\
&=&\!\!\!\!\alpha^3(m_{(-1)})\otimes\alpha^2(m_{(0)(-1)})\otimes m_{(0)(0)}
   -\alpha^2(m_{(0)(-1)})\otimes \alpha^3(m_{(-1)})\otimes m_{(0)(0)}\nonumber\\
&=&\!\!\!\!\alpha^3(m_{(-1)})\!\otimes\!\Big[(\alpha^2\!\otimes\! Id_M)\circ\Gamma(m_{(0)})\Big]
   \!-\!(\tau\!\otimes\! Id_M)\Big(\alpha^3(m_{(-1)})\!\otimes\! (\alpha^2\!\otimes\! Id_M)\Gamma(m_{(0)})\Big)\nonumber\\
&=&\!\!\!\!\alpha^3(m_{(-1)})\otimes\tilde\Gamma(m_{(0)})
    -(\tau\otimes Id_M)\Big(\alpha^3(m_{(-1)})\otimes\tilde\Gamma_M(m_{(0)})\Big)\nonumber\\
&=&\!\!\!\!(\alpha\otimes\tilde\Gamma)\Big(\alpha^2(m_{(-1)})\otimes m_{(0)}\Big)
-(\tau\otimes Id_M)\circ(\alpha\otimes\tilde\Gamma)\Big(\alpha^2(m_{(-1)})\otimes m_{(0)}\Big)\nonumber\\
&=&\!\!\!\!(\alpha\otimes\tilde\Gamma)\circ(\alpha^2\otimes Id_M)\circ\Gamma(m)
-(\tau\otimes Id_M)\circ(\alpha\otimes\tilde\Gamma)\circ(\alpha^2\otimes Id_M)\circ\Gamma(m)\nonumber\\
&=&\!\!\!\!(\alpha\otimes\tilde\Gamma)\circ\tilde\Gamma(m)-(\tau\otimes Id_M)\circ(\alpha\otimes\tilde\Gamma)\circ\tilde\Gamma(m)\nonumber.
\end{eqnarray}
Therefore $(M,\tilde\Gamma,\beta)$ is another $L$-comodule.
\end{proof}
We have the following result which is the Hom-analogue of Lemma 1 in \cite{ZLY}.
\begin{proposition}
Let $(V,\beta_V)$ and $(W,\beta_W)$ be two modules over the Hom-Lie coalgebra $(L, \gamma, \alpha)$ with structure maps 
$\Gamma_V:v\longmapsto\sum v_{(-1)}\otimes v_{(0)}$ and $\Gamma_W: w \longmapsto\sum w_{(-1)}\otimes w_{(0)}$, respectively.
If we set $M=V\otimes W$, $\beta=\beta_V\otimes\beta_W$ and
\begin{equation}
 \Gamma_M : M\otimes M, \quad v\otimes w \mapsto \sum[\alpha(v_{(-1)})\otimes v_{(0)}\otimes\beta_W(w)
+\alpha(w_{(-1)})\otimes\beta_V(v)\otimes w_{(0)}],\nonumber
\end{equation}
then  $(M,\beta)$ is a comodule over $L$.
\end{proposition}
\begin{proof}
We must prove (\ref{mod}) for $\Gamma_M$ and $\beta$. We have, for any $v\in V$ and all $w\in W$,
\begin{eqnarray}
\Gamma_M\!\circ\!\beta(v\otimes w)\!\!\!\!\!&=&\!\!\!\! \Gamma_M\Big(\beta_V(v)\otimes\beta_W(w)\Big)\nonumber\\
&=&\!\!\!\!\alpha\Big((\beta_V(v))_{(-1)}\Big)\otimes\Big(\beta_V(v)\Big)_{(0)}\otimes\beta_W^2(w)+
\alpha\Big(\Big(\beta_W(w)\Big)_{(-1)}\Big)\otimes\beta_V^2(v)\otimes\Big(\beta_W(w)\Big)_{(0)}\nonumber\\
&=&\!\!\!\!\!\alpha\Big(\!(\beta_V(v)\!)_{(-1)}\!\Big)\!\otimes\!\Big(\beta_V(v)\Big)_{(0)}\!\otimes\!\beta_W^2(w) 
\! +\!(\tau\!\otimes\! Id)\Big[\beta_V^2(v)\!\otimes\!\alpha\Big(\!\Big(\beta_W(w)\!\Big)_{(-1)}\!\Big)\!\otimes\!\Big(\beta_W(w)\Big)_{(0)}\!\Big]\nonumber\\
&=&\!\!\!\!\alpha^2\Big(v_{(-1)}\Big)\!\otimes\!\beta_V\Big(v_{(0)}\Big)\!\otimes\!\beta_W^2(w)
\!+\!(\tau\!\otimes\! Id)\Big[\beta_V^2(v)\!\otimes\!\alpha^2\Big(w_{(-1)}\Big)\!\otimes\!\beta_W\Big(w_{(0)}\Big)\Big]\mbox{ (by (\ref{expm1})) }\nonumber\\
%&=&\alpha^2\Big(v_{(-1)}\Big)\otimes\beta_V(v_)\otimes\beta_W^2(w)+\alpha^2(w_)\otimes\beta_V^2(v)\otimes\beta_W(w_)\nonumber\\
&=&\!\!\!\!(\alpha\otimes\beta_V\otimes\beta_W)\Big[\alpha\Big(v_{(-1)}\Big)\otimes v_{(0)}\otimes\beta_W(w)
+\alpha\Big(w_{(-1)}\Big)\otimes\beta_V(v)\otimes w_{(0)}\Big]\nonumber\\
&=&\!\!\!\!(\alpha\otimes\beta)\circ\Gamma_M(v\otimes w).\nonumber
\end{eqnarray}
This prove the first equality of (\ref{mod}). Next,  we set $L=(\alpha\otimes\Gamma_M)\circ\Gamma_M(v\otimes w)$. Then, 
\begin{eqnarray}
L\!\!\!\!\!&=&\!\!\!\!\!(\alpha\otimes\Gamma_M)\Big(\alpha\Big(v_{(-1)}\Big)\otimes v_{(0)}\otimes\beta_W(w)
+\alpha\Big(w_{(-1)}\Big)\otimes\beta_V(v)\otimes w_{(0)}\Big)\nonumber\\
&=&\!\!\!\!\!\alpha^2\Big(v_{(-1)}\Big)\otimes\Gamma_M\Big(v_{(0)}\otimes\beta_W(w)\Big)
+\alpha^2\Big(w_{(-1)}\Big)\otimes\Gamma_M\Big(\beta_V(v)\otimes w_{(0)}\Big)\nonumber\\
&=&\!\!\!\!\!\alpha^2\Big(v_{(-1)}\Big)\!\otimes\!\alpha\Big(v_{(0)(-1)}\Big)\!\otimes\! v_{(0)(0)}\!\otimes\!\beta_W^2(w)
\!+\!\alpha^2\Big(v_{(-1)}\Big)\!\otimes\!\alpha\Big[\Big(\beta_W(w)\Big)_{(-1)}\Big]\!\otimes\!\beta_V\Big(v_{(0)}\Big)
\!\otimes\!\Big(\beta_W(w)\Big)_{(0)}\nonumber\\
& &\!\!\!\!\!+\!\alpha^2\Big(\!w_{(-1)}\!\Big)\!\otimes\!\alpha\Big[\!\Big(\!\beta_V(v)\!\Big)_{(-1)}\!\Big]\!
   \otimes\!\Big(\!\beta_V(v)\!\Big)_{(0)}\!\otimes\!\beta_W\Big(\!w_{(0)}\!\Big)\!+\!\alpha^2\Big(\!w_{(-1)}\!\Big)\!
   \otimes\!\alpha\Big(\!w_{(0)(-1)}\!\Big)\!\otimes\!\beta_V^2(v)\!\otimes\! w_{(0)(0)}.\label{eq:as}
\end{eqnarray}
To finish we again set $P=(\alpha\otimes\Gamma_M)\circ\Gamma_M(v\otimes w)
-(\tau\otimes Id)\circ(\alpha\otimes\Gamma_M)\circ\Gamma_M(v\otimes w)$. Then, 
\begin{eqnarray}
P%&=&\!\!\!\!\!(\alpha\otimes\Gamma_M)\Gamma_M(v\otimes w) -(\tau\otimes Id)(\alpha\otimes\Gamma_M)\Gamma_M(v\otimes w) \cr 
\!\!\!\!\!\!&=&\!\!\!\!\!\alpha^2\Big(v_{(-1)}\Big)\!\otimes\!\alpha\Big(v_{(0)(-1)}\Big)\!\otimes\! v_{(0)(0)} \!\otimes\! \beta_W^2(w) %\cr 
%& &\!\!\!\!\!\!\! 
\!+\!\alpha^2\Big(v_{(-1)}\Big)\!\otimes\! \alpha\Big[\Big(\beta_W(w)\Big)_{(-1)}\Big]\!\otimes\! \beta_V\Big(v_{(0)}\Big)\!\otimes\! \Big(\beta_W(w)\Big)_{(0)}\nonumber\\
& &\!\!\!\!\!\!\!+\alpha^2\Big(w_{(-1)}\Big)\!\otimes\!\alpha\Big[\Big(\beta_V(v)\Big)_{(-1)}\!\Big]\!\otimes\!\Big(\beta_V(v)\Big)_{(0)}\!\!\otimes\!\beta_W\Big(w_{(0)}\Big) %\cr 
%& &
\!\!
+\!\alpha^2\Big(w_{(-1)}\Big)\!\otimes\!\alpha\Big(w_{(0)(-1)}\Big)\!\otimes\!\beta_V^2(v)\!\otimes\! w_{(0)(0)} \cr 
& &\!\!\!\!\!\!\!-\alpha\Big(v_{(0)(-1)}\Big)\!\otimes\!\alpha^2\Big(v_{(-1)}\Big)\!\otimes\! v_{(0)(0)}\!\otimes\!\beta_W^2(w) %\cr 
%& &\!\!\!\!\!\!\! 
\!-\!\alpha\Big[\Big(\beta_W(w)\Big)_{(-1)}\!\Big]\!\otimes\!\alpha^2\Big(v_{(-1)}\Big)\!\otimes\!\beta_V\Big(v_{(0)}\Big)\!\otimes\!\Big(\beta_W(w)\Big)_{(0)} \cr 
& &\!\!\!\!\!\!\!-\alpha\Big[\Big(\beta_V(v)\Big)_{(-1)}\!\Big]\!\otimes\!\alpha^2\Big(w_{(-1)}\Big)\!\otimes\!\Big(\beta_V(v)\Big)_{(0)}\!\otimes\!\beta_W\Big(w_{(0)}\Big) %\cr 
%& &\!\!\!\!\!\!\!
\!-\!\alpha\Big(w_{(0)(-1)}\Big)\!\otimes\!\alpha^2\Big(w_{(-1)}\Big)\!\otimes\!\beta_V^2(v)\!\otimes\! w_{(0)(0)}   \cr 
&=&\!\!\!\!\!\Big\{ \alpha^2\Big(v_{(-1)}\Big)\otimes\alpha\Big(v_{(0)(-1)}\Big)\otimes v_{(0)(0)} \otimes \beta_W^2(w)  % \cr    
%& &
+\alpha^2\Big(w_{(-1)}\Big)\otimes\alpha\Big(w_{(0)(-1)}\Big)\otimes\beta_V^2(v)\otimes w_{(0)(0)} \cr
& &\!\!\!\!\!\!\!-\alpha\Big(v_{(0)(-1)}\Big)\otimes\alpha^2\Big(v_{(-1)}\Big)\otimes v_{(0)(0)}\otimes\beta_W^2(w) %\cr 
%& &
-\alpha\Big(w_{(0)(-1)}\Big)\otimes\alpha^2\Big(w_{(-1)}\Big)\otimes\beta_V^2(v)\otimes w_{(0)(0)}  \Big\}  \cr  %\mbox{ \small ( by (\ref{bac}))}\nonumber\\
& &+\Big\{ \alpha^2\Big(v_{(-1)}\Big)\otimes \alpha\Big[\Big(\beta_W(w)\Big)_{(-1)}\Big]\otimes \beta_V\Big(v_{(0)}\Big)\otimes \Big(\beta_W(w)\Big)_{(0)}\cr
& &+\alpha^2\Big(w_{(-1)}\Big)\otimes\alpha\Big[\Big(\beta_V(v)\Big)_{(-1)}\Big]\otimes\Big(\beta_V(v)\Big)_{(0)}\otimes\beta_W\Big(w_{(0)}\Big) \cr     
& & -\alpha\Big[\Big(\beta_W(w)\Big)_{(-1)}\Big]\otimes\alpha^2\Big(v_{(-1)}\Big)\otimes\beta_V\Big(v_{(0)}\Big)\otimes\Big(\beta_W(w)\Big)_{(0)} \cr
& &-\alpha\Big[\Big(\beta_V(v)\Big)_{(-1)}\Big]\otimes\alpha^2\Big(w_{(-1)}\Big)\otimes\Big(\beta_V(v)\Big)_{(0)}\otimes\beta_W\Big(w_{(0)}\Big) \Big\} \label{eq:P}%\cr 
\end{eqnarray}
Using (\ref{expm1}) and some computational manipulations, one readily verify that the content of the second accolades  of the right hand 
side of the equality (\ref{eq:P}) is zero. Then, the latter  simply reads : 
\newpage
\begin{eqnarray}
P&=&\!\!\!\!\! \alpha^2\Big(v_{(-1)}\Big)\otimes\alpha\Big(v_{(0)(-1)}\Big)\otimes v_{(0)(0)} \otimes \beta_W^2(w)  
+\alpha^2\Big(w_{(-1)}\Big)\otimes\alpha\Big(w_{(0)(-1)}\Big)\otimes\beta_V^2(v)\otimes w_{(0)(0)} \cr
& &\!\!\!\!\!\!\!-\alpha\Big(v_{(0)(-1)}\Big)\otimes\alpha^2\Big(v_{(-1)}\Big)\otimes v_{(0)(0)}\otimes\beta_W^2(w) 
-\alpha\Big(w_{(0)(-1)}\Big)\otimes\alpha^2\Big(w_{(-1)}\Big)\otimes\beta_V^2(v)\otimes w_{(0)(0)}    \cr  
&=&\!\!\!\!\!(\alpha\!\otimes\!\alpha\!\otimes\! I)(\alpha(v_{(-1)})\!\otimes\! v_{(0)(-1)}\!\otimes\! v_{(0)(0)}
\!-\!v_{(0)(-1)}\!\otimes\!\alpha(v_{(-1)})\!\otimes\! v_{(0)(0)})\!\otimes\!\beta_W^2(w)\nonumber\\
&&\!\!\!\!\!+(\alpha\!\otimes\!\alpha\!\otimes\!\tau)(\alpha(w_{(-1)})\!\otimes\! w_{(0)(-1)}\!\otimes\! w_{(0)(0)}\!\otimes\! \beta_V^2(v)
\!-\!w_{(0)(-1)}\!\otimes\!\alpha(w_{(-1)})\!\otimes\! w_{(0)(0)}\!\otimes\!\beta_V^2(v))\nonumber\\
&=&\!\!\!\!\!\alpha(v_{(-1)(1)})\otimes\alpha((v_{(-1)(2)})\otimes\beta_V(v_{(0)})\otimes\beta_W^2(w) \cr 
& &\!\!\!\!\! +\alpha((w_{(-1)(1)})\otimes\alpha((w_{(-1)(2)})\otimes\beta_V^2(v)\otimes\beta_W(w_{(0)})
\mbox{ (by (\ref{expm2}))}\nonumber\\
&=&\!\!\!\!\!\gamma(\alpha(v_{(-1)}))\!\otimes\!\beta_V(v_{(0)})\!\otimes\!\beta_W^2(w)
\!+\!\gamma(\alpha(w_{(-1)}))\!\otimes\!\beta_V^2(v)\!\otimes\!\beta_W(w_{(0)}) \text{ (by comultiplicativity)}\nonumber\\
&=&\!\!\!\!\!(\gamma\!\otimes\!\beta_V\!\otimes\!\beta_W)(\alpha(v_{(-1)})\!\otimes\! v_{(0)}\!\otimes\!\beta_W(w)
\!+\!\alpha(w_{(-1)})\!\otimes\!\beta_V(v)\!\otimes\! w_{(0)})\nonumber\\
&=&\!\!\!\!\!(\gamma\otimes\beta_V\otimes\beta_W)\Gamma_M(v\otimes w)\nonumber\\
&=&\!\!\!\!\!(\gamma\otimes\beta)\Gamma_M(v\otimes w).\nonumber
\end{eqnarray}
% Le 2em terme de la 1ere colonne s'annule avec le 3em terme de la 2em colonne en utilisant (27) et 29. De meme que pour les 2 autres.\\
Thus (\ref{mod}) holds. This completes the proof.
\end{proof}
The following Corollary is an analogue of Corollary 1 of \cite{ZLY} which states that if 
$(L,\gamma)$ is a Lie co-algebra; and if $V$ and $W$ are two comodules over $L$, with structure maps
$\Delta_V$ and $\Delta_W$ respectively.  Then $V\otimes W$ is a comodule over $L$
with structure map $\Delta_{V\otimes W}=\Delta_V\otimes I+(\tau\otimes I)(I\otimes \Delta_W)$. A 
direct consequence is that $L\otimes L$ is a module over $L$.

\begin{corollary}
 Let $(L, \gamma, \alpha)$ be a Hom-Lie coalgebra. Then $(L\otimes L, \alpha\otimes\alpha)$ is a 
 comodule over $(L, \gamma, \alpha)$.
\end{corollary}
% {\color{red} Quelle est l'utilit\'e des r\'esultats suivants
% \begin{corollary}[\cite{ZLY}]
%  Let $(L,\gamma)$ be a Lie coalgebra. Let $V$ and $W$ be two comodules over $L$, with structure maps
% $\Delta_V$ and $\Delta_W$ respectively.  Then $V\otimes W$ is a {\color{red}module} over $L$ with structure map 
% $\Delta_{V\otimes W}=\Delta_V\otimes I+(\tau\otimes I)(I\otimes \Delta_W)$.
% \end{corollary}
% \begin{corollary}[\cite{ZLY}]
%  Let $(L, \gamma)$ be a Lie coalgebra. Then $L\otimes L$ is a {\color{red}module} over $L$.
% \end{corollary}
% }
 Let us now give the following definition.
\begin{definition}
A non-empty subset $M'$ of a comodule $(M,\beta_M)$ over the Hom-Lie coalgebra $(L,\gamma,\alpha)$, with structure map $\Gamma$, 
is said to be a subcomodule of $M$, if $\beta_M(M')\subset M'$ and $\Gamma({M'})\subset L\otimes M'$.
\end{definition}
% \begin{definition}
%  Morphism of comodule ...
% \end{definition}
\begin{proposition}
 Let $(L, \gamma, \alpha)$ be a Hom-Lie coalgebra, $V$ be an $L$-comodule. Then, there exists a direct sum decomposition of
 Hom-Lie comodules as 
$V\otimes V=V_+\oplus V_-$, where $V_+=\{s\in V\otimes V : \tau(s)=s\}$ and $V_-=\{s\in V\otimes V :  \tau(s)=-s\}$.
\end{proposition}
\begin{proof}
 It follows from the proof of Proposition 2 in \cite{ZLY} by observing that $V_+$ and $V_-$ are subcomodules of $V\otimes V$.
\end{proof}

\subsection{Hom-Poisson comodules}
% {\color{red} Y a un probl\`eme avec la d\'efinition suivante. C'est quoi {\bf L} ?}
\begin{definition}\label{hpm}
 Let $(A, \delta, \gamma, \alpha)$ be a cocommutative Hom-Poisson coalgebra and  $(M, \beta)$ be a Hom-module. 
 A Hom-Poisson comodule on $M$
 consists of two linear maps $\Delta : M \rightarrow A\otimes M$ and $\Gamma : M \rightarrow A\otimes M$ such that
 $(M, \Delta, \beta)$ is a comodule over the Hom-coassociative coalgebra $(A, \delta, \alpha)$ and
$(M, \Gamma, \beta)$ is a comodule over the Hom-Lie coalgebra $(A, \gamma, \alpha)$ satisfying 
%  $A$-comodule and a {\color{red} $L$}-comodule satisfying the following two relations:
\begin{eqnarray}
(\alpha\otimes \Delta)\circ\Gamma&=&(\gamma\otimes\beta)\circ\Delta+(\tau\otimes Id_M)\circ(\alpha\otimes\Gamma)\circ\Delta,\label{pm21} \\ 
(\delta\otimes\beta)\circ\Gamma&=&(\alpha\otimes\Gamma)\circ\Delta+(\tau\otimes Id_M)\circ(\alpha\otimes\Gamma)\circ\Delta. \label{pm22}
\end{eqnarray}
The maps $\Delta$ and $\Gamma$ will be called the structure maps of the Hom-Poisson comodule and the quadruple $(M,\Delta,\Gamma,\beta)$
will be called a Hom-Poisson comodule over $(A, \delta, \gamma, \alpha)$.
%{\color{red} Ici, $\delta$ n'a jou\'e aucun r\^ole.}
\end{definition}
\begin{remark}
  The conditions (\ref{pm21}) and (\ref{pm22}) can be respectively written as
\begin{eqnarray}
 \alpha(m_{[-1]})\!\otimes\! m_{[0](-1)}\!\otimes\ m_{[0](0)}\!=\!m_{(-1)(1)}\!\otimes\! m_{(-1)(2)}\!\otimes\!\beta(m_{(0)})
 +m_{(0)[-1]}\!\otimes\!\alpha(m_{(-1)})\!\otimes\! m_{(0)[0]},\label{pm21a}\\
m_{[-1]1}\otimes m_{[-1]2}\otimes\beta(m_{[0]})=\alpha(m_{(-1)})\otimes m_{(0)[-1]}\otimes m_{(0)[0]}
+m_{(0)[-1]}\otimes \alpha(m_{(-1)})\otimes m_{(0)[0]},\label{pm21b}
\end{eqnarray}
where we set $\delta(x)=x_1\otimes x_2$, $\gamma(x)=x_{(1)}\otimes x_{(2)}$,
 $\Delta(m)=m_{(-1)}\otimes m_{(0)}$  and  $\Gamma(m)=m_{[-1]}\otimes m_{[0]}$ 
 (without summation symbol, for symplicity).
\end{remark}
\begin{definition}
 %A morphism of Hom-Poisson comodules is a morphism of both underlying associative and Lie comodule structures.
A morphism of Hom-Poisson comodules is a morphism of both underlying comodule structures over 
the Hom-coassociative coalgebras and  the Hom-Lie coalgebras.
 \end{definition}
 \begin{proposition}
 When $(M, \Delta, \Gamma, \beta)$ is a comodule over the cocommutative Hom-Poisson coalgebra 
 $(A, \delta, \gamma, \alpha)$, then $(M, -\Delta, -\Gamma, \beta)$ is a comodule over the 
 cocommutative Hom-Poisson coalgebra $(A, -\delta, -\gamma, \alpha)$.
\end{proposition}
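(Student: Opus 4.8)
The plan is to verify, one by one, the six defining relations of Definition \ref{hpm} for the quadruple $(M, -\Delta, -\Gamma, \beta)$ sitting over the coalgebra $(A, -\delta, -\gamma, \alpha)$, showing that each collapses to the corresponding relation already satisfied by $(M, \Delta, \Gamma, \beta)$. Thus the whole proof is a sign-tracking argument built on the hypotheses, and no new identity beyond those in the excerpt is needed.

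First I would record that the base object $(A, -\delta, -\gamma, \alpha)$ is again a cocommutative Hom-Poisson coalgebra. The Hom-coassociativity, Hom-Lie and Hom-coLeibniz axioms are established exactly as in the proof of Lemma \ref{+} (with $\delta$ playing the role of $\Delta$ there), since those computations never invoke cocommutativity; moreover cocommutativity of $-\delta$ and skew-cosymmetry of $-\gamma$ follow at once from $\delta = \tau\circ\delta$ and $\gamma = -\tau\circ\gamma$. So nothing at the coalgebra level requires fresh work.

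The key structural observation for the comodule part is that every term in the axioms is homogeneous in the sign-flipped maps, via the elementary identities $\alpha\otimes(-\Delta) = -(\alpha\otimes\Delta)$, $(-\delta)\otimes\beta = -(\delta\otimes\beta)$, $\alpha\otimes(-\Gamma) = -(\alpha\otimes\Gamma)$, and so on, the negation on a single tensor factor pulling out as an overall sign. The comultiplicativity relations (\ref{moascds}) and (\ref{modh}) involve one flipped map on each side, so flipping multiplies both sides by $-1$ and they persist. The relations (\ref{ami}), (\ref{mod}), (\ref{pm21}) and (\ref{pm22}) are each built by composing exactly two maps drawn from $\{\Delta,\Gamma,\delta,\gamma\}$ per term, so each term carries a factor $(-1)^2 = 1$. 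For instance $(\alpha\otimes(-\Delta))\circ(-\Delta) = ((-\delta)\otimes\beta)\circ(-\Delta)$ collapses to $(\alpha\otimes\Delta)\circ\Delta = (\delta\otimes\beta)\circ\Delta$, which is precisely (\ref{ami}).

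Carrying out the same cancellation for the $L$-comodule relation (\ref{mod}) and for the two compatibility relations (\ref{pm21}) and (\ref{pm22}) completes the argument; here one verifies in particular that in (\ref{pm21}) the new base cobracket $-\gamma$ enters so that the term $((-\gamma)\otimes\beta)(-\Delta)$ reproduces $(\gamma\otimes\beta)\Delta$, keeping signs balanced. I do not anticipate a genuine obstacle: the only point demanding care is consistent bookkeeping, namely remembering that the cobrackets $\delta$ and $\gamma$ of the coalgebra must be negated in tandem with the comodule maps $\Delta$ and $\Gamma$, precisely so that every degree-two term remains sign-neutral.
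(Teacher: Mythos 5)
Your proof is correct. The paper in fact states this proposition without any proof (treating it, like the analogous part of Lemma \ref{+}, as straightforward), and your sign-homogeneity argument --- each axiom of Definition \ref{hpm} is either linear or quadratic in the negated maps, so the signs either cancel on both sides or cancel outright --- is exactly the intended verification and is complete.
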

% \begin{proof}
%  ....................................................................
%  
%  \vskip 5cm 
%  
%  
% \end{proof}
\begin{theorem}%\label{}
Let $(A, \delta, \gamma, \alpha)$ be a cocommutative Hom-Poisson coalgebra and $(M, \Delta, \Gamma, \beta)$ 
be a Hom-Poisson comodule. Then 
\begin{equation}
 \tilde\Delta=(\alpha^2\otimes Id_M)\circ\Delta :  M \longrightarrow A\otimes M, \quad 
 \tilde\Gamma=(\alpha^2\otimes Id_M)\circ\Gamma : M \longrightarrow A\otimes M, \label{eq:delta-gamma-tilde}
\end{equation}
% \begin{eqnarray}
%  \tilde\Delta&=&(\alpha^2\otimes Id_M)\circ\Delta :  M \longrightarrow A\otimes M, \label{eq:delta-tilde}\\
% \tilde\Gamma&=&(\alpha^2\otimes Id_M)\circ\Gamma : M \longrightarrow A\otimes M, \label{eq:gamma-tilde}
% \end{eqnarray}
are structure maps of another Hom-Poisson comodule structure on $M$.
\end{theorem}
\begin{proof} 
First of all, we set $Id_M=I$. For any $m\in M$, we have on one hand :
\begin{eqnarray}
(\alpha\otimes\tilde\Delta)\circ\tilde\Gamma(m)\!\!\!\!\!&=&\!\!\!\!\!(\alpha\otimes\tilde\Delta)\circ(\alpha^2\otimes I)\circ\Gamma(m)\nonumber\\
&=&\!\!\!\!\! (\alpha\otimes\tilde\Delta)\circ(\alpha^2\otimes I)(m_{[-1]}\otimes m_{[0]}) \cr 
&=&\!\!\!\!\!\alpha^3(m_{[-1]})\otimes[(\alpha^2\otimes I)\circ\Delta(m_{[0]})]\nonumber\\
&=&\!\!\!\!\!\alpha^3(m_{[-1]})\otimes\alpha^2(m_{[0](-1)})\otimes m_{[0](0)}\nonumber\\
&=&\!\!\!\!\!\Big((\alpha^2)^{\otimes2}\otimes I\Big)\Big(\alpha(m_{[-1]})\otimes m_{[0](-1)}\otimes m_{[0](0)}\Big)\nonumber\\
&=&\!\!\!\!\!\!\Big(\!(\alpha^2)^{\otimes2}\!\otimes\! I\!\Big)\Big(\!m_{(-1)(1)}\!\otimes\! m_{(-1)(2)}\!\otimes\!\beta(m_{(0)})\!
+\!m_{(0)[-1]}\!\otimes\!\alpha(m_{(-1)})\!\otimes\! m_{(0)[0]}\!\Big)
\mbox{ {(by (\ref{pm21a}))}} \nonumber\\
&=&\!\!\!\!\!\alpha^2\big(m_{(-1)(1)}\big)\otimes\alpha^2\big(m_{(-1)(2)}\big)\otimes\beta\big(m_{(0)}\big)
+\alpha^2\big(m_{(0)[-1]}\big)\otimes\alpha^3\big(m_{(-1)}\big)\otimes m_{(0)[0]}\nonumber\\
&=&\!\!\!\!\!\Big[(\alpha^2)^{\otimes2}\!\Big(m_{(-1)(1)}\!\otimes\! m_{(-1)(2)}\!\Big)\!\Big]\!\otimes\!\beta(m_{(0)})\! %\nonumber\\
%& &\!\!\!\!\!
+\!(\tau\!\otimes\! I)\Big(\alpha^3\!(m_{(-1)})\!\otimes\!\alpha^2\!(m_{(0)[-1]})\!\otimes\! m_{(0)[0]}\Big)\nonumber\\
&=&\!\!\!\!\!\Big[\Big((\alpha^2)^{\otimes2}\!\circ\!\gamma\Big)(m_{(-1)})\Big]\!\otimes\!\beta(m_{(0)})\!%\nonumber\\
%& &\!\!\!\!\!
+\!(\tau\!\otimes\! I)\Big[\alpha^3\!(m_{(-1)})\!\otimes\!(\alpha^2\!\otimes\! I)\Big(m_{(0)[-1]}\!\otimes\! m_{(0)[0]}\Big)\Big]\nonumber\\
&=&\!\!\!\!\!\gamma\Big(\alpha^2(m_{(-1)})\Big)\otimes\beta(m_{(0)})
+(\tau\otimes I)\Big[\alpha^3(m_{(-1)})\otimes(\alpha^2\otimes I)\circ\Gamma(m_{(0)})\Big]\nonumber\\
&=&\!\!\!\!\!(\gamma\otimes\beta)\Big(\alpha^2(m_{(-1)})\otimes m_{(0)}\Big)
+(\tau\otimes I)\Big(\alpha^3(m_{(-1)})\otimes\tilde\Gamma(m_{(0)})\Big)\nonumber\\
&=&\!\!\!\!\!(\gamma\otimes\beta)\circ(\alpha^2\otimes I)\circ\Delta(m)
+(\tau\otimes I)(\alpha\otimes\tilde\Gamma)(\alpha^2\otimes I)\Delta(m)\nonumber\\
%&=&\!\!\!\!\!(\gamma\otimes\beta)\tilde\Delta(m)+(\tau\otimes I)(\alpha\otimes\tilde\Gamma)(\alpha^2\otimes I)\Delta(m)\nonumber\\
&=&\!\!\!\!\!(\gamma\otimes\beta)\circ\tilde\Delta(m)+(\tau\otimes I)\circ(\alpha\otimes\tilde\Gamma)\circ\tilde\Delta(m);\nonumber
\end{eqnarray}
and on the other hand :
\begin{eqnarray}
(\delta\otimes\beta)\circ\tilde\Gamma(m) \!\!\!\!\!&=&\!\!\!\!\! (\delta\otimes\beta)\circ(\alpha^2\otimes I)\circ\Gamma(m)\nonumber\\
&=&\!\!\!\!\!(\delta\otimes\beta)\Big(\alpha^2(m_{[-1]})\otimes m_{[0]}\Big) \cr 
&=&\!\!\!\!\!\Big[\delta\Big(\alpha^2(m_{[-1]})\Big)\Big]\otimes\beta(m_{[0]})\nonumber\\
&=&\!\!\!\!\! \Big(\alpha^2(m_{[-1]})\Big)_1\otimes\Big(\alpha^2(m_{[-1]})\Big)_2\otimes\beta(m_{[0]})\cr 
&=&\!\!\!\!\!\alpha^2(m_{[-1]1})\otimes\alpha^2(m_{[-1]2})\otimes\beta(m_{[0]}) \cr 
&=&\!\!\!\!\!\Big((\alpha^2)^{\otimes2}\otimes I\Big)\Big(m_{[-1]1}\otimes m_{[-1]2}\otimes\beta(m_{[0]})\Big)\nonumber\\
&=&\!\!\!\!\!\Big(\!(\alpha^2)^{\otimes2}\!\otimes\! I\!\Big)\Big(\!\alpha(m_{(-1)})\!\otimes\! m_{(0)[-1]}\!\otimes\! m_{(0)[0]} %\nonumber\\
%&&\!\!\!\!\!
\!+\! m_{(0)[-1]}\!\otimes\!\alpha(m_{(-1)})\!\otimes\! m_{(0)[0]}\!\Big) \text{(by (\ref{pm21b})) }\nonumber\\
&=&\!\!\!\!\!\alpha^3(m_{(-1)})\otimes\alpha^2\Big(m_{(0)[-1]}\Big)\otimes m_{(0)[0]}+\alpha^2\Big(m_{(0)[-1]}\Big)\otimes\alpha^3(m_{(-1)})\otimes m_{(0)[0]}\nonumber\\
&=&\!\!\!\!\!\alpha^3(m_{(-1)})\otimes(\alpha^2\otimes I)\circ\Gamma(m_{(0)})%\nonumber\\
%& &\!\!\!\!\!
+(\tau\otimes I)\Big(\alpha^3(m_{(-1)})\otimes\alpha^2(m_{(0)[-1]})\otimes m_{(0)[0]}\Big)\nonumber\\
&=&\!\!\!\!\!\alpha^3(m_{(-1)})\otimes\tilde\Gamma(m_{(0)})+(\tau\otimes I)\Big[\alpha^3(m_{(-1)})\otimes\Big((\alpha^2\otimes I)\circ\Gamma(m_{(0)})\Big)\Big]\nonumber\\
&=&\!\!\!\!\!(\alpha\otimes\tilde\Gamma)\Big(\alpha^2(m_{(-1)})\otimes m_{(0)}\Big)+(\tau\otimes I)\Big(\alpha^3(m_{(-1)})\otimes\tilde\Gamma(m_{(0)})\Big)\nonumber\\
&=&\!\!\!\!\!(\alpha\otimes\tilde\Gamma)\circ(\alpha^2\otimes I)\circ\Delta(m)
+(\tau\otimes I)\circ(\alpha\otimes\tilde\Gamma)\Big(\alpha^2(m_{(-1)})\otimes m_{(0)}\Big)\nonumber\\
&=&\!\!\!\!\!(\alpha\otimes\tilde\Gamma)\circ\tilde\Delta(m)+(\tau\otimes I)\circ(\alpha\otimes\tilde\Gamma)\circ(\alpha^2\otimes I)\circ\Delta(m)\nonumber\\
&=&\!\!\!\!\!(\alpha\otimes\tilde\Gamma)\circ\tilde\Delta(m)+(\tau\otimes I)\circ(\alpha\otimes\tilde\Gamma)\circ\tilde\Delta(m)\nonumber.
\end{eqnarray}
Therefore, we have another Hom-Poisson comodule structure on $M$.
\end{proof}
In the following two results, $\tilde\Delta$ and $\tilde\Gamma$ are given by  (\ref{eq:delta-gamma-tilde}).
\begin{corollary}
Let $(A, \delta, \gamma, \alpha)$ be a cocommutative Hom-Poisson coalgebra. 
If $(M, \Delta, \Gamma, \beta)$ is a Hom-Poisson comodule over the  cocommutative Hom-Poisson coalgebra 
$(A, \delta\circ\alpha, \gamma\circ\alpha, \alpha)$, 
then $(M, \tilde\Delta, \tilde\Gamma, \beta)$ is a comodule over $(A, \delta, \gamma, \alpha)$.
\end{corollary}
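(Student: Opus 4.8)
The plan is to treat this as the comodule-side counterpart of the coalgebra twisting in Lemma \ref{hpatt}, and to verify the Hom-Poisson comodule axioms of Definition \ref{hpm} for the pair $(\tilde\Delta,\tilde\Gamma)$ directly, in the same manner as the preceding Theorem. First I would unpack the hypothesis: since $(M,\Delta,\Gamma,\beta)$ is a comodule over $(A,\delta\circ\alpha,\gamma\circ\alpha,\alpha)$, every structural relation holds with $\delta$ and $\gamma$ replaced by $\delta\circ\alpha$ and $\gamma\circ\alpha$. Concretely, the coassociativity (\ref{ami}) becomes $(\alpha\otimes\Delta)\circ\Delta=((\delta\circ\alpha)\otimes\beta)\circ\Delta$, the Hom-Lie comodule relation (\ref{mod}) becomes $((\gamma\circ\alpha)\otimes\beta)\Gamma=(\alpha\otimes\Gamma)\Gamma-(\tau\otimes Id_M)(\alpha\otimes\Gamma)\Gamma$, and the two compatibilities (\ref{pm21})--(\ref{pm22}) — equivalently (\ref{pm21a})--(\ref{pm21b}) in Sweedler form — carry the same substitution. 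The auxiliary tools I would keep ready are the comultiplicativity relations $\delta\circ\alpha=\alpha^{\otimes2}\circ\delta$ and $\gamma\circ\alpha=\alpha^{\otimes2}\circ\gamma$ together with $\Delta\circ\beta=(\alpha\otimes\beta)\circ\Delta$, since these are exactly what let one slide the powers of $\alpha$ produced by the twist $\alpha^2\otimes Id_M$ from one tensor leg to another.

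I would then check the four axioms in the order used in the Theorem. The coassociative comodule axioms for $\tilde\Delta$ and the Hom-Lie comodule axioms for $\tilde\Gamma$ are governed by Lemmas \ref{smc} and \ref{csml}, so once the ambient comultiplication is correctly identified these two reduce to an application of those lemmas. The substance lies in the two Poisson compatibilities: I would expand $(\alpha\otimes\tilde\Delta)\tilde\Gamma(m)$ and $(\tilde\Delta\otimes\beta)\tilde\Gamma(m)$ in Sweedler notation, use comultiplicativity to move the two factors $\alpha^2$ created by the twist past $\delta$ and $\gamma$, insert the $\alpha$-twisted forms (\ref{pm21a})--(\ref{pm21b}) of the hypothesis, and re-collect the outcome as $(\gamma\otimes\beta)\tilde\Delta(m)+(\tau\otimes Id_M)(\alpha\otimes\tilde\Gamma)\tilde\Delta(m)$ and its analogue. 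This is line-for-line the computation in the Theorem, now driven by the twisted hypotheses.

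The step I expect to be the main obstacle is the bookkeeping of the powers of $\alpha$. Because the comodule is given over $(A,\delta\circ\alpha,\gamma\circ\alpha,\alpha)$ while the conclusion is phrased over the untwisted $(A,\delta,\gamma,\alpha)$, each occurrence of $\delta$ or $\gamma$ in the data arrives with one extra $\alpha$ relative to what the target requires, and this surplus must be absorbed exactly against the factor $\alpha^2\otimes Id_M$ of the twist by a single use of comultiplicativity. I would therefore carry out each identity with the $\alpha$-exponents displayed explicitly at every line and reconcile them leg by leg, rewriting the final tensor in terms of $\tilde\Delta$ and $\tilde\Gamma$ only once the exponents agree; the precise power of $\alpha$ carried by the twisting map is the point to scrutinise, since it is exactly what must match so that the surplus $\alpha$ coming from $\delta\circ\alpha$ and $\gamma\circ\alpha$ is cancelled.
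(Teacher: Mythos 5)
Your plan correctly identifies the only available line of attack (the paper itself prints this corollary with no proof at all), and you rightly single out the bookkeeping of the $\alpha$-exponents as the decisive point. The difficulty is that when that bookkeeping is actually carried out, the cancellation you are counting on does not occur. Consider the Hom-coassociativity axiom (\ref{ami}). The hypothesis, being over $(A,\delta\circ\alpha,\alpha)$, reads $(\alpha\otimes\Delta)\circ\Delta=((\delta\circ\alpha)\otimes\beta)\circ\Delta$. The mechanism of the preceding Theorem is to apply $((\alpha^2)^{\otimes2}\otimes Id_M)$ to both sides of the hypothesis and recognise the result as the corresponding identity for $\tilde\Delta=(\alpha^2\otimes Id_M)\circ\Delta$. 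The left-hand side does become $(\alpha\otimes\tilde\Delta)\circ\tilde\Delta$. But on the right-hand side comultiplicativity gives $(\alpha^2)^{\otimes2}\circ(\delta\circ\alpha)=\delta\circ\alpha^3=(\delta\circ\alpha)\circ\alpha^2$, so the right-hand side becomes $((\delta\circ\alpha)\otimes\beta)\circ\tilde\Delta$ and \emph{not} $(\delta\otimes\beta)\circ\tilde\Delta$: the factor $\alpha^2$ introduced by the twist slides past $\delta\circ\alpha$ and is reabsorbed into $\tilde\Delta$, leaving the surplus $\alpha$ intact. In Sweedler terms you would need $\delta(\alpha^3(m_{(-1)}))\otimes\beta(m_{(0)})=\delta(\alpha^2(m_{(-1)}))\otimes\beta(m_{(0)})$, which has no reason to hold. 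The same off-by-one occurs for the Hom-Lie axiom (\ref{mod}) and for both compatibilities (\ref{pm21})--(\ref{pm22}), and the relation (\ref{moascds}) is of no help in removing it.

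What the direct verification actually establishes is that $(M,\tilde\Delta,\tilde\Gamma,\beta)$ is a Hom-Poisson comodule over the \emph{same} coalgebra $(A,\delta\circ\alpha,\gamma\circ\alpha,\alpha)$ --- but that is just the preceding Theorem applied to that coalgebra, not the statement of the Corollary. So the gap is not in your organisation of the argument but in the step you yourself flagged as the obstacle: each axiom of Definition \ref{hpm} is linear in the structure maps, and the twist $\alpha^2\otimes Id_M$ commutes (via comultiplicativity) past $\delta\circ\alpha$ and $\gamma\circ\alpha$ on both sides, so twisting $\Delta$ and $\Gamma$ can never change the ambient comultiplications from $\delta\circ\alpha,\gamma\circ\alpha$ to $\delta,\gamma$. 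You should either prove the weaker (and essentially tautological) version whose conclusion is over $(A,\delta\circ\alpha,\gamma\circ\alpha,\alpha)$, or record that the Corollary as printed does not follow from the Theorem and would require additional hypotheses (for instance relations forcing $\delta\circ\alpha^3=\delta\circ\alpha^2$ and $\gamma\circ\alpha^3=\gamma\circ\alpha^2$ on the relevant legs).
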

% \begin{proof}
%   ............................................................
%   
%   \vskip 3cm
% \end{proof}
%{\color{red} C'est quoi $\tilde f$ ? Il me semble que c'est simplement $f$.}
\begin{proposition}
Let $f : (M, \Delta, \Gamma, \beta)\rightarrow (M', \Delta', \Gamma', \beta')$ be a morphism of modules over the Hom-Poisson coalgebra
 $(A, \delta, \gamma, \alpha)$.
Then, $f: (M, \tilde\Delta, \tilde\Gamma, \beta)\rightarrow (M', \tilde\Delta', \tilde\Gamma', \beta')$
is a morphism of comodules over the Hom-Poisson coalgebra $(A, \delta, \gamma, \alpha)$.
\end{proposition}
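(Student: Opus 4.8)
The plan is to unwind the definition. By the definition of a morphism of Hom-Poisson comodules, $f$ is such a morphism precisely when it is simultaneously a morphism of the underlying coassociative comodule and of the underlying Lie comodule. Concretely, the hypothesis that $f$ is a morphism for the original structure maps means
\[
\Delta'\circ f=(Id_A\otimes f)\circ\Delta,\qquad \Gamma'\circ f=(Id_A\otimes f)\circ\Gamma,
\]
together with the compatibility $f\circ\beta=\beta'\circ f$. Since the underlying linear map is not altered by the twisting (the symbol $\tilde f$ denotes $f$ itself, now regarded as a map between the twisted comodules), the compatibility with $\beta,\beta'$ carries over verbatim. Hence the only thing to check is that $f$ intertwines the twisted structure maps $\tilde\Delta,\tilde\Gamma$ with $\tilde\Delta',\tilde\Gamma'$.

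The key observation is that the twisting $\tilde\Delta=(\alpha^2\otimes Id_M)\circ\Delta$ modifies only the coalgebra tensor factor, whereas the comodule-morphism condition applies $Id_A\otimes f$ to the module factor. Maps acting on disjoint tensor slots commute, so that $(\alpha^2\otimes Id_{M'})\circ(Id_A\otimes f)=(Id_A\otimes f)\circ(\alpha^2\otimes Id_M)=\alpha^2\otimes f$. I would therefore compute, for the coassociative part,
\begin{align*}
\tilde\Delta'\circ f
&=(\alpha^2\otimes Id_{M'})\circ\Delta'\circ f \\
&=(\alpha^2\otimes Id_{M'})\circ(Id_A\otimes f)\circ\Delta \\
&=(Id_A\otimes f)\circ(\alpha^2\otimes Id_M)\circ\Delta \\
&=(Id_A\otimes f)\circ\tilde\Delta,
\end{align*}
where the middle equality invokes the hypothesis $\Delta'\circ f=(Id_A\otimes f)\circ\Delta$.

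Replacing $\Delta$ by $\Gamma$ throughout yields the identical chain $\tilde\Gamma'\circ f=(Id_A\otimes f)\circ\tilde\Gamma$, so that $f$ is at once a morphism of the twisted coassociative comodule and of the twisted Lie comodule, hence of the twisted Hom-Poisson comodules. I do not anticipate a genuine obstacle: the whole content is the functoriality of Yau's twisting, namely the interchange of maps acting on separate tensor factors, exactly parallel to the proof of Theorem \ref{morfi}. The only point requiring care is fixing the convention for a morphism of comodules, that is, the placement of $Id_A\otimes f$ on the module factor and the direction of the intertwining relations; once that convention is adopted, each of the two verifications reduces to the single-line computation displayed above.
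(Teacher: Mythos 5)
Your proof is correct and follows essentially the same route as the paper: the paper simply states that the proof is ``similar to that of Theorem \ref{morfi}'', and your explicit computation $\tilde\Delta'\circ f=(\alpha^2\otimes Id_{M'})\circ(Id_A\otimes f)\circ\Delta=(Id_A\otimes f)\circ\tilde\Delta$ (and likewise for $\Gamma$) is exactly the argument of that theorem transposed to the comodule setting, resting on the same observation that the twist acts on the coalgebra tensor factor while the morphism acts on the module factor. You have in fact written out the details the paper leaves implicit, including the correct reading of $\tilde f$ as the same underlying linear map $f$.
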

\begin{proof}
  The proof is similar to that of Theorem \ref{morfi}.
%  {\color{red} Ecrire quand m\^eme la preuve}
\end{proof}
\noindent
 {\bf Acknowledgements.} {
The second author is supported by the {\bf nlaga project} and this work was done during 
his visit at IMSP (Port-Novo, Benin) funded by the {\bf Deutscher Akademischer Austausch Dienst (DAAD)}. To both institutions he   
expresses his gratitude and thanks.}

 %{\bf Received: November, 2014}

\end{document}